\renewcommand{\cite}[1]{\citeauthor*{#1} [\citeyear{#1}]}
\definecolor{couleurCitations}{rgb}{0,0.65,0}
\definecolor{couleurRef}{rgb}{0.75,0,0}
\title{\textbf{Adaptive Noisy Clustering}}
\author{Micha\"el CHICHIGNOUD\footnote{ Seminar for Statistics, ETH Z\"urich, R\"amistrasse 101, CH-8092 Z\"urich, SWITZERLAND} \, and S\'ebastien LOUSTAU\footnote{ LAREMA, Universit\'e d'Angers, 2 Bvd Lavoisier 49045 Angers Cedex, FRANCE}}
\date\today
\newcommand{\epr}{\hfill\hbox{\hskip 4pt
                \vrule width 5pt height 6pt depth 1.5pt}\vspace{0.5cm}\par}
\newcommand{\cB}{{\cal B}}
\newcommand{\cC}{{\cal C}}
\newcommand{\cF}{{\cal F}}
\newcommand{\cG}{{\cal G}}
\newcommand{\cK}{{\cal K}}
\newcommand{\cM}{{\cal M}}
\newcommand{\cR}{{\cal R}}
\newcommand{\cX}{{\cal X}}
\newcommand{\cZ}{{\cal Z}}
\newcommand{\bE}{\mathbb E}
\newcommand{\bN}{{\mathbb N}}
\newcommand{\bP}{{\mathbb P}}
\newcommand{\bR}{{\mathbb R}}
\newcommand{\bZ}{{\mathbb Z}}
\def\sino{\frac{1}{n}\sum_{i=1}^n}
\def \beqn {\begin{eqnarray}}
\def \beqnn {\begin{eqnarray*}}
\def \eeqn {\end{eqnarray}}
\def \eeqnn {\end{eqnarray*}}
\def \no {\Arrowvert}
\def \ind {\hbox{ 1\hskip -3pt I}}
\def \R {\mathbb{R}}
\def \P  {\mathbb{P}} 
\def \R {\mathbb{R}}
\def \E {\mathbb{E}}
\def \N {\mathbb{N}}
\newcommand{\op}[1]{\operatorname{#1}}			
\newcommand{\noyaudeconvolution}[2]{\cK_{#1}\left(#2\right)}
\newcommand{\constseuil}{A}
\newcommand{\ordreterme}[3]{r_{#1}^*\left(#2,#3\right)}
\newcommand{\constbias}{\zeta_1}
\newcommand{\constsupboundprocess}{\zeta_2}
\newcommand{\constloustau}{\zeta_3}
\newcommand{\constminimax}{C_1}
\newcommand{\constadaptative}{C_2}
\newcommand{\threshold}[1]{\delta_{#1}}
\newcommand{\estimator}[1]{\hat{\bc}_{#1}}
\newcommand{\bc}{\mathbf{c}}
\newcommand{\constantAtwo}{T(d)}
\newcommand{\constantsupconvolution}{\eta_{\infty}}
\newcommand{\constantFourierPolynomial}{\rho}
\newcommand{\constantmargin}{\kappa}
\theoremstyle{plain}
\newtheorem{theorem}{Theorem}
\newtheorem{proposition}{Proposition}
\newtheorem{lemma}{Lemma}
\newtheorem{definition}{Definition}
\newcommand{\1}{{\rm 1}\mskip -4,5mu{\rm l} }
\newcommand{\argmin}{\mathop{\mathrm{arg\,min}}}
\newcommand{\expectationZ}{\bE}
\newcommand{\probaX}{P_X}
\newcommand{\probaZ}{P_Z}
\newcommand{\empiricalproba}{P_n}
\definecolor{orange}{rgb}{0.8,0.8,0.8}
\newcommand{\nno}{|\hspace{-0.05cm}\no}
\begin{document}
\selectlanguage{english}

\maketitle

\begin{abstract}
The problem of adaptive noisy clustering is investigated. Given a set of noisy observations $Z_i=X_i+\epsilon_i$, $i=1, \ldots, n$, the goal is to design clusters associated with the law of $X_i$'s, with unknown density $f$ with respect to the Lebesgue measure. Since we observe a corrupted sample, a direct approach as the popular {\it $k$-means} is not suitable in this case. In this paper, we propose a noisy $k$-means minimization, which is based on the $k$-means loss function and a deconvolution estimator of the density $f$. In particular, this approach suffers from the dependence on a bandwidth involved in the deconvolution kernel.  Fast rates of convergence for the excess risk are proposed for a particular choice of the bandwidth, which depends on the smoothness of the density $f$. 

Then, we turn out into the main issue of the paper: the data-driven choice of the bandwidth. We state an adaptive upper bound for a new selection rule, called ERC (Empirical Risk Comparison). This selection rule is based on the Lepski's principle, where empirical risks associated with different bandwidths are compared. Finally, we illustrate that this adaptive rule can be used in many statistical problems of $M$-estimation where the empirical risk depends on a nuisance parameter.
\end{abstract}
{\small
\paragraph{keywords:}
{Adaptivity,} {deconvolution}, {errors-in-variables}, {fast rates}, {M-estimation},
{Statistical learning}.
}
%

\section{Introduction}

\paragraph{Motivation.}
Nonparametric procedures of estimation contain some nuisance parameter(s) whose optimal selection is not obvious. From the minimax point of view, these methods reach optimal rates of convergence, based on a regularity assumption over the unknown function to estimate. As a consequence, optimal parameters depend on some unknown smoothness index $s>0$ of some functional space (e.g. the Hölder smoothness). In density estimation, the most popular technique of kernel estimators (see \cite{Rosenblatt56} or \cite{Parzen62}) suffers from the dependence on a bandwidth parameter $\lambda>0$. In deconvolution as well, kernel deconvolution estimators are rather popular to estimate a density from a sequence of independent and identically distributed (i.i.d.) contaminated observations:
\beqn
\label{noisydata0}
Z_i=X_i+\epsilon_i, \mbox{ for }i=1,\dots,n,
\eeqn
where $f$ denotes the unknown density of the i.i.d. sequence $X_1,X_2,...,X_n$ and $ \eta $ is the known density of the i.i.d. random variables $\epsilon_1,\epsilon_2,...,\epsilon_n$, independent of the $X_i$'s. In this framework, a {\it kernel deconvolution estimator} of $f$ is given by:
\beqn
\label{decest}
\hat{f}_\lambda(x)=\frac{1}{n}\sum_{i=1}^n\mathcal{K}_\lambda\left(Z_i-x\right),
\eeqn
where $\mathcal{K}_\lambda$ is a deconvolution kernel and $\lambda>0$ is a bandwidth. Estimators of the form \eqref{decest} are of first interest in this paper. \cite{fan} has proved \textit{minimax rates of convergence} for \eqref{decest} for an optimal value of the bandwidth. This deterministic choice trades off a bias term and a variance term, and depends on unknown parameters, such as the smoothness index $s>0$ of the density $f$. 

From the adaptive point of view, the aim is the data-driven selection of an estimator from a given family which has some \textit{adaptive optimal properties}: the selected estimator reaches the minimax rate for any function in a vast range of regularities $s\in]0,s^+[,~s^+>0$. In this case, the proposed estimator does not depend on the exact smoothness index $s>0$ of the target function but only on the upper bound $s^+$. It reaches minimax adaptivity with respect to the unknown smoothness. One of the most popular method for choosing the bandwidth is suggested by \cite{Lepski_Mammen_Spokoiny97} in a gaussian white noise model. It is based on the {\it Lepski's principle} (\cite{Lepski90}). The idea is to test several estimators (by comparison) for different values of the bandwidth. This work is at the origin of a quantity of theoretical papers dealing with minimax adaptive bounds in nonparametric estimation (see for instance \cite{
Goldenshluger_Nemirovski97}, \cite{Mathe05}, \cite{Chichignoud12}). From the practical point of view, Lepski's method has also received further development, such as the intersection of confidence intervals (ICI) rule (see \cite{Katkovnik99}). This algorithm reveals computational advantages in comparison to the traditional Lepski's procedure, or even traditional cross-validation techniques since it does not require to compute all the estimators in the family. This algorithm was originally design for a problem of gaussian filtering. It is at the core of many applications in image processing (see \cite{Kervrann_Boulanger06}, \cite{Katkovnik_Foi_Egiazarian_Astola10} and references therein). In the context of deconvolution, \cite{Comte_Lacour13} gets adaptive optimal results (for pointwise and global risks) using an improvement of the standard Lepski's principle (see \cite{Goldenshluger_Lepski11}).\\

\paragraph{Noisy data.} In the present paper, we deal with the problem of clustering with noisy observations \eqref{noisydata0}. Classical results in the presence of noisy observations are given in a deconvolution framework (see the references above), or alternatively in regression with errors-in-variables. \cite{fantruong} gives for the first time the minimax rates in the model of regression with errors-in-variables. \cite{delaiglehallmeister} studies both density deconvolution and regression with noisy data when the noise density $\eta$ is unknown. We mention the monograph of \cite{meister} for a complete survey, including cross-validation techniques to choose the bandwidth in regression with noisy measurements. In statistical learning, \cite{koltgeometry} proposes to study different geometric characteristics of a multivariate distribution, such as the entropy dimension or the number of clusters, thanks to noisy data of the form \eqref{noisydata0}. In a quiet related framework, minimax results in Hausdorff 
distance are stated in \cite{mme} for manifold estimation of the support of a distribution thanks to noisy data. 

More recently, \cite{pinkfloyds} proposes to study a model of classification with noisy data, by giving for the first time minimax rates in binary classification with errors in variables. This paper is at the origin of other works in statistical learning with noisy data (see \cite{Loustau12} for supervised classification and \cite{Loustau12bis} for unsupervised problems). In these problems, the use of a deconvolution kernel estimator \eqref{decest} is necessary to derive \textit{excess risk bounds}. \cite{pinkfloyds} suggests a deterministic bandwidth choice to get {minimax {\it fast} convergence rates} (i.e. faster than $n^{-1/2}$). Unfortunately, as usually, this choice depends on the unknown smoothness of the density $f$. 
\paragraph{Outlines.} 
The aim of this contribution is to get adaptive fast rates for the excess risk via a new Lepski-type procedure. To the best of our knowledge, standard adaptive procedures such as cross-validation, model selection or aggregation cannot be directly applied in our particular context (see Section \ref{sec:adaptive} for details). Moreover, the Lepski's principle, which is usually used by comparing estimators for a given pointwise or global risk, cannot be directly applied to get excess risk bounds. 
In this contribution, we design a new selection rule based on the Lepski's principle with a comparison of {\it empirical risks} with different nuisance parameters. This method, called {\it Empirical Risk Comparison} (ERC), allows us to derive adaptive results in the context of clustering with noisy data. 
It could be applied to the general setting of $M$-estimation to derive adaptive results in other statistical problems.

The paper is organized as follows. In Section \ref{sec:NC}, we describe the model and the empirical risk minimization called {\it noisy $k$-means}, which uses a collection of deconvolution estimators \eqref{decest} to deal with noisy data. In Section \ref{sec:nonadaptive}, we  state a non-adaptive risk bound, under a smoothness assumption over the density $f$, and an ill-posedness assumption over the noise distribution. These rates are reached by the noisy $k$-means procedure, where the bandwidth $\lambda>0$ is chosen to trade off a bias-variance decomposition. 
In Section \ref{sec:adaptive}, we present the adaptive procedure ERC to choose the bandwidth automatically. The theoretical results guarantee the same rates of convergence, modulo an extra-log term which seems to be optimal.  Finally, we conclude in Section \ref{sec:gen} by a generalization of the ERC selection rule to other $M$-estimation problems, such as binary classification, local M-estimation or quantile estimation. Section \ref{sec:conclusion} concludes the paper whereas Section \ref{sec:proofs}-\ref{sec:appendix} are dedicated to the proofs of the main results.

\section{Noisy Clustering} 
\label{sec:NC}
\subsection{The problem}
Isolate meaningfull groups from the data is an interesting topic in data analysis with applications in many fields, such as biology or social sciences. However, in many real-life situations, direct data are not available and measurement errors occur. Then, we observe a corrupted sample of i.i.d. observations:
\beqn
\label{noisydata}
Z_i=X_i+\epsilon_i,~i=1,\ldots n,
\eeqn 
where $f$ denotes the unknown density of the i.i.d. sequence $X_1,X_2,...,X_n$ and $ \eta $ is the known density of the i.i.d. random variables $\epsilon_1,\epsilon_2,...,\epsilon_n$, independent of the $X_i$'s. In the sequel, the law of the sequence $(X_i)_{i=1,\ldots ,n}$ is denoted as $\probaX$, with density $f$ with respect to the Lebesgue measure on $\bR^d$. We also assume that $X_i$, $i=1,\ldots, n$ are contained in $\mathcal{B}(0,1)$, the unit Euclidean ball of $\bR^d$. This assumption is rather classical in clustering (see \cite{lbclustering}, \cite{levrard}) and known as the {\it peak power constraint} (extension to $\mathcal{B}(0,M)$ with $M>1$ is straightforward). Given some integer $k\geq 1$, 
the problem of noisy clustering is to learn $k$ clusters from $\probaX$ when a contaminated empirical version $Z_1,\ldots ,Z_n$ is observed. This problem is a particular case of inverse statistical learning and is known to be an inverse problem (see \cite{Loustau12}). It has been studied recently in \cite{Loustau12bis}, where non-adaptive  results are proposed.

For this purpose, we introduce a set of codebooks $\bc=(c_1,\dots,c_k)\in\R^{dk}$, and the standard $k$-means loss function $\gamma(\bc,X):=\min_{j=1,\dots,k}\no X-c_j\no^2$, where $ \no\cdot\no $ stands for the Euclidean norm on $ \bR^d $. The corresponding clustering risk of a codebook $\bc$ is given by:
\begin{equation}\label{def risk}
R(\bc):=\E_{\probaX}\gamma(\bc,X)=\int_{\bR^d}\gamma(\bc,x)f(x)dx.
\end{equation}
Given \eqref{def risk}, we measure the performance of the latter codebook $\bc$ in terms of excess risk, defined as:
\begin{equation}\label{def excess risk}
R(\bc,\bc^*):=R(\bc)-R(\bc^*),
\end{equation}
where $ \bc^*\in\arg\min_{\bc\in\cB(0,1)} R(\bc)$ is called an {\it oracle}. The oracle set is denoted as $\mathcal{M}$ and we assume in the rest of the paper that the number $|\mathcal{M}|$ of oracles is finite. This assumption is satisfied in the context of Pollard's regularity assumptions (see \cite{pollard82}), i.e. when $f$ has a continuous density (w.r.t. the Lebesgue measure) such that the Hessian matrix of $\bc\mapsto  R(\bc)$ is positive definite. In the direct case, the problem of minimizing \eqref{def excess risk} has been investigated in a variety of areas. For a given number of clusters $k\geq 1$, the most popular technique is the $k$-means procedure. It consists in partitioning the dataset $X_1,\ldots , X_n$ into $k$ clusters by minimizing the empirical risk:
$$
R_n(\bc)=\frac{1}{n}\sum_{i=1}^n\min_{j=1,\ldots, k}\no X_i-c_j\no^2,
$$
where $\bc=(c_1,\ldots, c_k)\in\R^{dk}$ is a set of centers. A cluster is associated to each observation by giving its nearest center $c_j$, $j=1,\ldots ,k$. The $k$-means clustering minimization has been widely studied in the literature. Since the early work of Pollard (\cite{pollard81},\cite{pollard82}), consistency and rates of convergence have been considered by many authors. \cite{biau} suggests rates of convergence of the form $\mathcal{O}(1/\sqrt{n})$ whereas \cite{lbclustering} proposes a complete minimax study. More recently, \cite{levrard} states fast rates of the form $\mathcal{O}(1/n)$ under Pollard's regularity assumptions. It improves a previous result of \cite{gg}.

However, in this paper, the problem is the knowledge of $X_1,\ldots, X_n$ since we deal with a noisy dataset \eqref{noisydata}. For this reason, we introduce a deconvolution step in the stochastic minimization of the $k$-means procedure.

\subsection{The noisy $k$-means minimization}
Following \cite{Loustau12bis}, the idea is to plug a deconvolution kernel estimator of the form  \eqref{decest} into the true risk \eqref{def risk}. For this purpose, let us introduce the following notations. We denote by $\mathcal{F}[g]$ the Fourier transform of an integrable function $g$, whereas $\mathcal{F}^{-1}$ stands for the inverse Fourier transform. Let $\mathcal{K}$ be a kernel in $L_2(\bR^d)$ such that $\mathcal{F}[\mathcal{K}]$ exists. Then, provided that $\mathcal{F}[\eta]$ exists and is strictly positive, we can introduce a deconvolution kernel $\mathcal{K}_\lambda$ as follows:
\begin{align}\label{def deconvolution kernel}
\mathcal{K}_\lambda\::\:&\bR^d\rightarrow\bR\nonumber\\
& t\mapsto \lambda^{\text-d}\cF^{-1}\left[\frac{\cF[\mathcal{K}](\cdot)}{\cF[\eta](\cdot/\lambda)}\right](t/\lambda),
\end{align}
where $ \lambda>0 $ is called the bandwidth. The kernel $ \mathcal{K} $ in \eqref{def deconvolution kernel} is a kernel with particular properties (see Section \ref{assumptions}). Note that with a slight abuse of notations, we write $t/\lambda$ for the vector $(t_1/\lambda,\ldots, t_d/\lambda)$. Moreover, \eqref{def deconvolution kernel} depends explicitly on the density $\eta$ of the noise which is supposed to be known. In practice, this knowledge could be avoided using repeated measurements (see for instance \cite{delaiglehallmeister}). 

Moreover, let $\mathcal{C}:=\left\{\bc=(c_1,\dots,c_k)\in\bR^{dk}\::\:c_j\in\cB(0,1),\:j=1,\dots,k\right\} $  be the set of possible centers in the unit ball $ \cB(0,1) $ of the Euclidean space $\R^d$.
We then introduce the following collection of noisy $k$-means minimizers:
\beqn
\label{noisykmeans}
\hat{\bc}_\lambda:=\arg\min_{\mathbf{c}\in\mathcal{C}} R_n^\lambda(\bc),~~\lambda>0,
\eeqn
where $R_n^\lambda(\bc)$ is called the deconvolution empirical risk. This quantity is defined as:
\begin{equation}\label{def empirical risk}
 R_n^\lambda(\bc)=\int_{\cB(0,1)}\gamma(\bc,x)\hat{f}_\lambda(x)dx=\frac{1}{n}\sum_{i=1}^n\gamma_\lambda(\bc,Z_i),
\end{equation}
where $\gamma_\lambda(\mathbf{c},Z)$ is the following, convolution product:
\beqnn
\gamma_\lambda(\mathbf{c},Z):=\big[\cK_\lambda\ast(\gamma(\mathbf{c},\cdot)\1_{\cB(0,1)}(\cdot))\big](Z)=\int_{\cB(0,1)}\noyaudeconvolution{\lambda}{{Z-x}}\gamma(\mathbf{c},x) dx,~~\mathbf{c}= (c_1, \ldots, c_k)\in\cC.
\eeqnn
where $ \1 $ denotes the indicator function.
Note that the restriction to the closed unit ball $ \cB(0,1) $ appears only for technicalities, and using any compact set is possible. 

Parameter $ \lambda $ in \eqref{noisykmeans} is of great interest in this paper. In particular, an appropriate choice of the bandwidth allows us to get fast rates (Section \ref{sec:nonadaptive}) and adaptive results (Section \ref{sec:adaptive}). In minimax nonparametric estimation, the standard choice of $ \lambda $ trades off a {\it bias-variance decomposition}, that is an upper bound of the measurement error (see \cite{Tsybakov08} for an overview). Thanks to \cite{Loustau12bis}, we can expect the same kind of upper bounds for the excess risk as follows:
\begin{eqnarray}
\label{bvdecomposition}
R(\hat{\mathbf{c}}_\lambda,\mathbf{c}^*)\leq (R-R_n^\lambda)(\hat{\mathbf{c}}_\lambda,\mathbf{c}^*)&\leq&(R-R^\lambda)(\hat{\mathbf{c}}_\lambda,\mathbf{c}^*)+(R^\lambda-R_n^\lambda)(\hat{\mathbf{c}}_\lambda,\mathbf{c}^*)\nonumber\\
&=:&\mbox{bias}(\lambda)+\mbox{var}(\lambda),
\end{eqnarray}
where in the sequel, for any fixed $\bc,\bc'\in\mathcal{C}$, $R^\lambda(\bc,\bc'):=\E \big[R_n^\lambda(\bc)-R_n^\lambda(\bc')\big]$ and $ \bE $ is the expectation w.r.t. $ \probaZ^{\otimes n} $. 
The first part of the decomposition is called a {\it bias} term. It depends on the unknown smoothness $ s>0 $ of the density $ f $ and on the deconvolution kernel (see Proposition \ref{prop bias} for details). The second term of this decomposition is called the {\it variance} term. It is the stochastic error of the empirical risk minimization. It depends on a standard complexity parameter and on the noise assumption (see below). This term could be controlled using empirical process theory in the spirit of \cite{svm} (see Proposition \ref{prop deviation massart}).
Therefore, as a first step, we derive in Section \ref{sec:nonadaptive} fast rates of convergence  from an optimal bandwidth $ \bar\lambda:=\bar\lambda(s) $ minimizing the latter bias-variance trade-off (see Theorem \ref{thm:nonadaptiveup}). 
\section{Fast rates for noisy clustering}
\label{sec:nonadaptive}
In this section, we propose to give a non-adaptive excess risk bound for the noisy $k$-means procedure \eqref{noisykmeans}. This result is obtained under classical assumptions from both the statistical inverse problem literature and the area of fast rates. We recall and discuss these assumptions for completeness.
\subsection{Main assumptions}
\label{assumptions}
First of all, as in standard deconvolution problems, the use of a deconvolution kernel requires some additional assumptions on the kernel $\mathcal{K}\in L_2(\R^d)$ in \eqref{def deconvolution kernel}.
\\

\noindent\textbf{(K1)} There exist $S=(S_1,\dots,S_d)\in\R_d^+$, $K_1>0$ such that kernel $\mathcal{K}$ satisfies
$$
\mbox{supp}\mathcal{F}[\mathcal{K}]\subset [-S,S]\mbox{ and }\sup_{t\in\R^d}|\mathcal{F}[\mathcal{K}](t)|\leq K_1,
$$
where $\mbox{supp} \,g=\{x:g(x)\not= 0\}$ and $[-S,S]=\bigotimes_{v=1}^d [-S_v,S_v]$.\\

\noindent This assumption is trivially satisfied for different standard kernels, such as the \textit{sinc} kernel. This assumption arises for technicalities in the proofs and can be relaxed using a finer algebra. Moreover, in the sequel, we consider a kernel $\mathcal{K}$ of order $m\in\N$ as follows:
\begin{itemize}
\item $\int_{\R^d} \mathcal{K}(x)dx=1$,
\item $\int_{\R^d} \mathcal{K}(x)x_v^kdx=0$, $\forall k\leq m$, $\forall v\in\{1,\ldots, d\}$,
\item $\int_{\R^d} |\mathcal{K}(x)||x_v|^{m}dx<\infty$, $\forall  v\in\{1,\ldots, d\}$.
\end{itemize}
For the construction of kernels of order $m$, the univariate case is presented in \cite{Tsybakov08}. \cite{Comte_Lacour13} have detailed the multivariate case in an anisotropic framework, where the kernel can have a different order in each direction. The construction of kernels of order $m$ satisfying \textbf{(K1)} could be managed using for instance the so-called Meyer wavelet (see \cite{mallat}).

Moreover, we need an additional assumption on the regularity of the density $f$ to control the bias term. In this paper, this regularity in expressed in terms of isotropic H\"older spaces.

\begin{definition}
\label{def_holder} Fix $s>0$ and $L>0$, and  let
$\lfloor s\rfloor$ be the largest integer  strictly less than
$s$. The \emph{isotropic Hölder class} $\Sigma_d(s,L)$ is
the set of  functions $f:\bR^d\rightarrow\bR$ having on $\bR^d$
all partial derivatives of order $\lfloor s\rfloor$ and such that
for any $ x,y\in \bR^d$:

\begin{align*}
\left|\frac{\partial^{|p|}f(x)}{\partial
x_1^{p_1}\cdots\partial x_d^{p_d}}-\frac{\partial^{|
p|}f(y)}{\partial y_1^{p_1}\cdots\partial y_d^{p_d}}\right|&\leq L
\sum_{v=1}^d|x_v-y_v|^{s-\lfloor s\rfloor},\quad\forall\;
p\in\bN^d~:~|p|:=p_1+\cdots+p_d=\lfloor s\rfloor;\\
\sum_{m=0}^{\lfloor s\rfloor}\sum_{|
p|=m}\sup_{x\in\bR^d}\left|\frac{\partial^{|p|}f(x)}{\partial
x_1^{p_1}\cdots\partial x_d^{p_d}}\right|&\leq L,
\end{align*}
where $x_v$ and $y_v$ are the $v^{th}$ components of $x$ and $y$.

\end{definition}
In the sequel, we assume that the multivariate density $f$ of the law $P_X$ belongs to the isotropic H\"older class $\Sigma_d(s,L)$, for some $s,L>0$. It means that the density $f$ has a similar regularity in any direction. An extension to the anisotropic H\"older class is given in \cite{Loustau12bis}, which states fast rates in this case. As in standard density estimation or deconvolution, the bandwidth choice is more nasty and depends explicitly on the direction  (see also \cite{Comte_Lacour13}). It is out of the scope of the present paper.

We also need an assumption on the noise distribution $\eta$ as follows:
\\

\noindent\textbf{Noise Assumption} {\bf NA}$(\constantFourierPolynomial,\beta)$. There exists some vector $ \beta=(\beta_1,\dots,\beta_d)\in(0,\infty)^d $ and some positive constant $ \constantFourierPolynomial$ such that $ \forall t\in\bR^d $:
$$
\left|\cF[\eta](t)\right|\geq\constantFourierPolynomial\prod_{v=1}^d\left(\frac{t_v^2+1}{2}\right)^{\text- \beta_v/2}.
$$
\textbf{NA($\rho,\beta$)} deals with a lower bound on the behavior of the characteristic function of the noise density $\eta$. This lower bound is a sufficient condition to get excess risk bounds. However, to study the optimality in the minimax sense, we need an upper bound of the same order for the characteristic function. This is not the purpose of this paper. Moreover, this noise assumption is related with a polynomial behavior of the Fourier transform of $\eta$. This case is called the mildly ill-posed case in the deconvolution or statistical inverse problem  literature (see \cite{meister}). The severely ill-posed case corresponds to an exponential decreasing of the characteristic function in \textbf{NA($\rho,\beta$)}, such as a gaussian measurement error. This case is not considered in this paper for simplicity (see \cite{Comte_Lacour13} in multivariate deconvolution).

Finally, to reach fast rates of convergence, we need to introduce a margin assumption. This type of assumption is now standard in classification since the work of Tsybakov (\cite{mammen} or \cite{tsybakov2004}). In clustering, we use the following version of the well-known margin assumption (see \cite{empimini} for a related point of view): 
\\

\noindent\textbf{Margin Assumption} {\bf MA}$(\constantmargin)$: For any $ \bc\in\cC$, there exists some positive constant $ \constantmargin $ such that:
$$
\nno\bc-\bc^*(\bc)\nno^2\leq \constantmargin\big[R(\bc)-\inf_{\bc'\in\bR^{dk}}R(\bc')\big],
$$
where $\bc^*(\bc)\in\mathcal{M}$ is the nearest optimal cluster associated to $\bc$ and $\nno\cdot\nno$ stands for the Euclidean norm in $\R^{dk}$.\\

The margin assumption proposes a control of the Euclidean norm by the excess risk. Since we restrict the study to a compact set, it is easy to see that \textbf{MA($\constantmargin$)} allows us to write:
$$
\no \gamma(\bc,X)-\gamma(\bc^*(\bc),X)\no^2_{L_2(P_X)}\leq C_1\nno\bc-\bc^*(\bc)\nno^2\leq C_1 \constantmargin\E_{P_X}\big[\gamma(\bc,X)-\gamma(\bc^*(\bc),X)\big].
$$
As a result, we can use a localization principle and reach fast rates of convergence. 

The introduction of a margin assumption in clustering is actually not a novelty. It is strongly related with some well-known regularity assumptions involved in the study of the consistency of the $k$-means procedure (see \cite{pollard82}, \cite{gg}). Indeed, as shown in \cite{gg}, \textbf{MA($\kappa$)} is satisfied if $f$ is continuous and the Hessian matrix of the mapping $ \mathbf{c} \longmapsto R(\mathbf{c}) $ is positive definite at any point $\bc^*\in\mathcal{M}$. In this case, the constant $\kappa$ is related with the smallest eigenvalue of the Hessian matrix. These conditions have been introduced by Pollard to get limit theorems for the $k$-means.

Finally, \cite{levrard} has interpreted Pollard's regularity assumption in terms of well-separated classes as follows. For any $ \bc=(c_1,\ldots, c_k)\in\cC $, we associate to each center $c_i$, $i=1,\ldots ,k$ the Vorono\"i cell $V_i(\bc)$ defined as:
$$
V_i(\bc)=\{x\in\R^{d}:\min_{j=1,\ldots,k}\no x-c_j\no=\no x-c_i\no\}.
$$
Let $\partial V_i(\bc)$ be the boundary of the Vorono\"i cell $V_i(\bc)$ associated with $c_i$, for $i=1, \ldots, k$. Then, a sufficient condition to have a continuous density $f$ and a positive definite Hessian matrix is to control the sup-norm of $f$ on the union of all possible $|\mathcal{M}|$ boundaries $\partial V(\bc^*)=\cup_{i=1}^k\partial V_i(\bc^*)$, associated with $c^*\in\mathcal{M}$ as follows:
$$
\|f_{|\cup_{\bc^*\in\mathcal{M}} \partial V(\bc^*)}\|_\infty\leq \constantAtwo\inf_{\bc^*\in\mathcal{M},i=1,\ldots k}\probaX(V_i(\bc^*)),
$$
where $\constantAtwo$ is a constant depending on the dimension $d$. As a result,  the margin assumption \textbf{MA($\kappa$)} is guaranteed when the source distribution $\probaX$ is well concentrated around its optimal clusters, which is related to well-separated classes.

\subsection{A first excess risk bound}
We now present an excess risk bound for the collection of estimators introduced in \eqref{noisykmeans}, under the previous assumptions.
\begin{theorem}
\label{thm:nonadaptiveup}
Assume that {\bf NA}$(\constantFourierPolynomial,\beta)$ and {\bf MA}$(\constantmargin)$ are satisfied for some $ \beta\in(1/2,\infty)^d $, $\constantFourierPolynomial,\constantmargin>0$. Suppose $\constantsupconvolution:=\no \eta\no_\infty<\infty $ and $ f\in\Sigma_d(s,L) $ with $ s,L>0 $. Then, denoting by $\hat{\mathbf{c}}^{\bar\lambda}_n$ a solution of \eqref{noisykmeans} with:
$$ \overline{\lambda}= n^{-{1}/(2s+2\bar{\beta})},
$$ 
there exists a universal constant $ \constminimax $ depending on $ \constantmargin,w, L, d,s, \beta, \constantFourierPolynomial, k, \constantsupconvolution$ and $ |\cM|  $, and an integer $n_0\in\N^*$ such that for any $\mathbf{c}^*\in\mathcal{M}$ and any $ n\geq n_0 $:
                  
                  \[
                  \mathbb{E} R(\hat{\mathbf{c}}_{\overline{\lambda}},\mathbf{c}^*)\leq \constminimax n^{-s/(s+\bar{\beta})},
                  \]
                  where $\bar{\beta}=\sum_{v=1}^d \beta_v$.
\end{theorem}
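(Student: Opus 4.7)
The plan rests on the bias--variance decomposition \eqref{bvdecomposition} combined with the margin assumption \textbf{MA}$(\constantmargin)$ and the two technical bounds announced in the text (Propositions \ref{prop bias} and \ref{prop deviation massart}). Since $\hat{\bc}_{\overline{\lambda}}$ minimises $R_n^{\overline{\lambda}}$ on $\cC\ni\bc^*$, the basic inequality for $M$-estimation gives
\[
R(\hat{\bc}_{\overline{\lambda}},\bc^*) \leq [R-R^{\overline{\lambda}}](\hat{\bc}_{\overline{\lambda}},\bc^*) + [R^{\overline{\lambda}}-R_n^{\overline{\lambda}}](\hat{\bc}_{\overline{\lambda}},\bc^*) =: \mathrm{bias}(\overline{\lambda})+\mathrm{var}(\overline{\lambda}).
\]
Each term will be bounded with a factor $\nno\hat{\bc}_{\overline{\lambda}}-\bc^*\nno$ which the margin assumption will convert into $\sqrt{R(\hat{\bc}_{\overline{\lambda}},\bc^*)}$, enabling the usual localisation by Young's inequality.

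For the bias, write $\mathrm{bias}(\lambda)=\int_{\cB(0,1)}[\gamma(\hat{\bc}_\lambda,x)-\gamma(\bc^*,x)](f-f_\lambda)(x)\,dx$ where $f_\lambda=\cK_\lambda\ast f$. A kernel of order $m>s$ satisfying \textbf{(K1)} together with $f\in\Sigma_d(s,L)$ yields $\|f-f_\lambda\|_\infty\lesssim\lambda^s$ by the usual Taylor argument, and $\bc\mapsto\gamma(\bc,x)$ is Lipschitz on $\cB(0,1)$, giving $|\mathrm{bias}(\lambda)|\leq C_b\,\lambda^s\,\nno\hat{\bc}_\lambda-\bc^*\nno$. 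For the variance, I would invoke a Talagrand/Bousquet concentration inequality applied to the empirical process $\bc\mapsto R_n^\lambda(\bc)-R^\lambda(\bc)$. The two inputs are: (i) the envelope bound $\|\gamma_\lambda(\bc,\cdot)\|_\infty\lesssim\lambda^{-\bar\beta}$, obtained from \textbf{NA}$(\rho,\beta)$ and \textbf{(K1)}; and (ii) the variance control $\bE_{P_Z}\bigl[(\gamma_\lambda(\bc,Z)-\gamma_\lambda(\bc^*,Z))^2\bigr]\lesssim\lambda^{-2\bar\beta}\nno\bc-\bc^*\nno^2$, obtained by Parseval combined with \textbf{NA}$(\rho,\beta)$ on the support of $\cF[\cK_\lambda]$. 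A peeling argument over $\ell^2$-balls centred at $\bc^*$ then delivers, with probability at least $1-1/n$,
\[
\mathrm{var}(\overline{\lambda})\leq C_v\Bigl[\lambda^{-\bar\beta}\sqrt{\nno\hat{\bc}_{\overline{\lambda}}-\bc^*\nno^2/n}+\lambda^{-2\bar\beta}/n\Bigr].
\]

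Applying \textbf{MA}$(\constantmargin)$ to replace $\nno\hat{\bc}_{\overline{\lambda}}-\bc^*\nno$ by $\sqrt{\constantmargin\,R(\hat{\bc}_{\overline{\lambda}},\bc^*)}$ and using Young's inequality $2ab\leq a^2/\varepsilon+\varepsilon b^2$ to absorb the square-root factors on the left-hand side gives, on the same event, $R(\hat{\bc}_{\overline{\lambda}},\bc^*)\lesssim\lambda^{2s}+\lambda^{-2\bar\beta}/n$. Equating these two contributions yields $\overline{\lambda}=n^{-1/(2s+2\bar\beta)}$ and the announced rate $n^{-s/(s+\bar\beta)}$; since the excess risk is uniformly bounded on $\cC$, integrating over the complementary $1/n$-probability event costs only an additive $O(1/n)$. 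The main obstacle is the variance step: the empirical process has an envelope blowing up as $\lambda^{-\bar\beta}$ \emph{and} an $L_2(P_Z)$-diameter of the same order localised around $\bc^*$, so preserving a fast-rate scaling requires a careful peeling and Bousquet-type bound. The condition $\beta_v>1/2$ in each coordinate is precisely what ensures convergence of the Fourier integrals underlying the variance control, and is the reason why $\bar\beta=\sum_v\beta_v$ appears unaltered in the final exponent.
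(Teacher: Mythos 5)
Your overall decomposition and the tools you invoke (kernel bias bound, \textbf{MA}$(\constantmargin)$ to convert $\nno\bc-\bc^*\nno$ into $\sqrt{\constantmargin R(\bc,\bc^*)}$, Bousquet plus a peeling/localisation of the deconvolution empirical process, Young's inequality to absorb the square root) are exactly what the paper uses, so the architecture is right. However, the variance step contains a genuine gap: you assert a high-probability bound of the form $\mathrm{var}(\overline{\lambda})\leq C_v\bigl[\lambda^{-\bar\beta}\sqrt{\nno\hat\bc_{\overline{\lambda}}-\bc^*\nno^2/n}+\lambda^{-2\bar\beta}/n\bigr]$ holding with probability at least $1-1/n$, with $C_v$ a fixed constant. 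A Bousquet-type inequality with probability $1-e^{-t}$ carries terms $\sigma\sqrt{t/n}$ and $bt/n$, so forcing $t\asymp\log n$ injects an unavoidable $\sqrt{\log n}$ (resp.\ $\log n$) into the bound; you cannot hide it in $C_v$. Combined with the bias $\lambda^{2s}$ and the oracle bandwidth, your route would deliver the rate $(\log n/n)^{s/(s+\bar\beta)}$ rather than the log-free rate claimed in Theorem~\ref{thm:nonadaptiveup}. The paper avoids this by keeping $t$ free: Proposition~\ref{prop deviation massart} gives, for every $t>0$, the bound $\ordreterme{\lambda}{A}{t}$ through a fixed-point equation whose dependence on $t$ is only $\sqrt t$ plus lower-order terms, and then the expectation is obtained by integrating the exponential tail over $t$, which costs only universal constants and not a $\log n$. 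Your ``integrate the complementary $1/n$-event'' strategy is only correct when paired with a log-free high-probability statement, which is not available here.

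Two smaller points. First, your envelope bound should read $\|\gamma_\lambda(\bc,\cdot)-\gamma_\lambda(\bc',\cdot)\|_\infty\lesssim\lambda^{-\bar\beta-d/2}\nno\bc-\bc'\nno$ (Lemma~\ref{lem sup bound process}); the extra $\lambda^{-d/2}$ is harmless in the end because at $\overline\lambda$ one has $n\lambda^d\to\infty$, so the Bernstein ($b t/n$) contribution is lower order, but it must appear in the intermediate bound. Second, the condition $\beta_v>1/2$ is not about convergence of the Fourier integrals on the compact support $[-S/\lambda,S/\lambda]$ (which converge for any $\beta_v>0$); it is there to guarantee $\bar\beta>d/2$ and hence $n\overline\lambda^{\,d}=n^{(2s+2\bar\beta-d)/(2s+2\bar\beta)}\to\infty$, which is exactly what makes the $1/(\sqrt n\lambda^{d/2})$ and $t^2/(n\lambda^d)$ remainders in the fixed-point equation negligible.
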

The proof is an application of a localization approach in the spirit \cite{Massart07}, applied to the noisy set-up. As in \cite{Loustau12}, the decomposition \eqref{bvdecomposition} allows us to control the excess risk. More precisely, the variance can be controlled by mixing empirical process argues as in \cite{svm}, gathering with the noise assumption \textbf{NA($\rho,\beta$)}. The bias term is bounded using both the smoothness of $f$ and the margin assumption \textbf{MA($\kappa$)}.

Theorem \ref{thm:nonadaptiveup} improves the previous result of \cite{Loustau12bis} in the particular case of finite dimensional clustering, where a $\sqrt{\log\log(n)}$ term appears in the RHS (see Theorem 3 in \cite{Loustau12bis}). Rates of convergence of Theorem \ref{thm:nonadaptiveup} are fast rates when $\bar{\beta}<s$. It generalizes the result of \cite{levrard} to the errors-in-variables case since we can see coarsely that rates to the order $\mathcal{O}(1/n)$ are reached when $\epsilon=0$. Here, the prize to pay for the inverse problem is the quantity $\sum_{i=1}^d \beta_i$, related to the tail behavior of the characteristic function of the noise distribution $\eta$ in {\bf NA}$(\constantFourierPolynomial,\beta)$. 

An open problem is to derive the optimality of Theorem \ref{thm:nonadaptiveup} in the minimax sense, under the margin assumption \textbf{MA($\constantmargin$)} and the noise assumption {\bf NA}$(\constantFourierPolynomial,\beta)$. 
In this direction, \cite{pinkfloyds} proposes a complete minimax study in classification with error-in-variables by using a comparable estimation procedure. We then conjecture that the rate $ n^{-s/(s+\bar{\beta})} $ is minimax over Hölder spaces.

\section{Bandwidth Selection}
\label{sec:adaptive}
In this section, we turn out into the main issue of this paper: the data-driven choice of the bandwidth $\lambda>0$ in the collection of estimators $\{\hat\bc_\lambda,\lambda>0\}$ defined in \eqref{noisykmeans}. The goal is to reach adaptive excess risk bound similar to Theorem \ref{thm:nonadaptiveup} for a choice of $\lambda$ which does not depend on the smoothness of $f$.\\

In supervised learning (such as regression or binary classification), it is standard to choose a bandwidth - or a tuning - parameter using a decomposition of the set of observations. A training set is used to construct a family of candidate estimators, each one associated with a different value of the bandwidth.  Then, a test set allows to estimate the generalization performances of each candidate. It gives rise to the family of cross-validation methods, or aggregation procedures. Unfortunately, in unsupervised tasks, this simple estimation is not possible. The lack of efficiency of cross-validation methods in clustering has been illustrated in \cite{tib} for the problem of choosing $k$ in the $k$-means.
Moreover, in the presence of errors in variables, such as in deconvolution, it is also quiet standard to perform cross-validation to choose the bandwidth of a deconvolution estimator. As described in \cite{meister}, it is possible to estimate the squared risk $\no \hat{f}_\lambda -f\no^2$ with Plancherel theorem, leading to the estimation of the Fourier transform of the unknown density. 
However, in our framework, this method seems hopeless since the optimal value of $\lambda$ does not minimize a squared risk but an excess risk of the form \eqref{def excess risk}.
Eventually, model selection was introduced for selecting the hypothesis space over a sequence of nested models (e.g. finite dimension models) with a fixed empirical risk. 
Penalization methods are also suitable to choose smoothing parameters of well-known statistical methods such as splines, SVM or Tikhonov regularization methods. The idea is to replace the choice of the smoothing parameter by the choice of the radius into a suitable ellipsoid. Unfortunately, here,  the nuisance parameter $ \lambda $ affects directly the empirical risk \eqref{def empirical risk}, and a model selection method can not be directly applied in this context.\\

Theorem \ref{thm:nonadaptiveup} below motivates the use of a comparison method based on the Lepski's principle (\cite{Lepski90}). Indeed, the non-adaptive choice of $\bar\lambda=n^{\text-1/(2s+2\bar{\beta})}$ trades off a bias-variance decomposition  of the excess risk \eqref{bvdecomposition} and allows to get fast rates of convergence. As a result, the Lepski's principle appears as the most commonly tool to propose an adaptive estimator $\hat{\bc}_{\hat\lambda}$, where $\hat\lambda$ mimics the oracle $\bar\lambda$ of Theorem \ref{thm:nonadaptiveup}. The built estimator $\hat{\mathbf{c}}_{\hat\lambda}$ will be called adaptive since it does not depend on the smoothness $s$. 

To define the selection rule, we first remind some definitions and notations. Given a kernel $ \mathcal{K} $ satisfying the previous assumptions, we note $ \|\mathcal{K}\|_1 $ the $ L_1 $-norm of the kernel on $ \bR^d $. The constant $ \constantsupconvolution:=\no \eta\no_\infty $ is the sup-norm of the noise density $ \eta $, whereas $ \rho>0 $ and $ \bar{\beta}=\sum_{v=1}^d \beta_v $ are parameters involved in the noise assumption {\bf NA}$(\constantFourierPolynomial,\beta)$. Moreover, $ \kappa $ is the  constant in the margin assumption {\bf MA}$(\constantmargin)$. In the sequel, $\mathcal{V}(d)=\pi^{d/2}/\Gamma(d/2+1)$, where $\Gamma(\cdot)$ stands for the Gamma function.\\ Define the threshold term:
\beqn
\label{threshold}
\threshold{\lambda}:=\frac{2^{10}\sqrt{2}\mathcal{V}(d)\no \cK\no_1^2\kappa\eta_\infty}{\rho^2}\frac{\lambda^{\text-2\bar\beta}\log(n)}{n},
\eeqn
where $ \lambda $ belongs to the bandwidth set $ \Lambda:=[\lambda_{\min},\lambda_{\max}] $ with 
$$
\lambda_{\min}:=\frac{\log^{1/\bar\beta}(n)}{n^{1/2\bar\beta}}~\text{and}~ \lambda_{\max}:=\big(1/\log(n)\big)^{1/(2s^{+}+2\bar\beta)},
$$
where $s^+>0$ is an upper bound on the regularity index of $f$. In this section, we take $n$ sufficiently large such that $ n^{-1/(2s+2\bar\beta)}\in\Lambda $. Moreover, for some constant $a\in(0,1)$, we set:
 $$ \Lambda_a:=\left\{\lambda\in\Lambda:\:\exists
  m\in\bN\:,\:\lambda=\lambda_{\max}a^m\right\},$$
   a discrete exponential net on the bandwidth set with cardinal $ |\Lambda_a| $.
  
  We are ready to introduce the adaptive bandwidth choice, called ERC (Empirical Risk Comparison):
\beqn
\label{def:erc}
\hat\lambda=\max\left\{\lambda\in\Lambda_a\::\:R_n^{\lambda'}(\hat\bc_{\lambda})-R_n^{\lambda'}(\hat\bc_{\lambda'})\leq 3\threshold{\lambda'},\:\forall {\lambda'}\leq{\lambda}\right\}.
\eeqn
The noisy $k$-means estimator \eqref{noisykmeans} with bandwidth $\hat\lambda$ chosen from ERC rule \eqref{def:erc} has the following property.
\begin{theorem}\label{th adaptivity lambda}
Assume that {\bf NA}$(\constantFourierPolynomial,\beta)$ and {\bf MA}$(\constantmargin)$ are satisfied for some $ \beta\in(1/2,\infty)^d $, $\constantFourierPolynomial,\constantmargin>0$. Suppose $\constantsupconvolution:=\no \eta\no_\infty<\infty $ and $f\in\Sigma(s,L)$, where $s\in [0,s^+)$ and $ L>0 $. Then, there exists a universal constant $ \constadaptative$ depending on $ \constantmargin,w, L, d,s, \beta, \constantFourierPolynomial, k, \constantsupconvolution,$ $ |\cM|  $, and $ n_1\in\bN $ such that for any $\mathbf{c}^*\in\mathcal{M}$ and any $ n\geq n_1 $, estimator $\hat{\mathbf{c}}_{\hat\lambda}$  with $ \hat \lambda$ selected by ERC \eqref{def:erc} satisfies:
                  
                  \[
                  \mathbb{E} R(\hat{\mathbf{c}}_{\hat\lambda},\mathbf{c}^*)\leq \constadaptative \left(\frac{\log(n)}{n}\right)^{s/(s+\bar\beta)},
                  \]
                  where $\bar{\beta}=\sum_{v=1}^d \beta_v$.
\end{theorem}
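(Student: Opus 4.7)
The plan is to adapt the standard Lepski scheme to empirical-risk comparison. I first define an oracle discrete bandwidth $\lambda_\star\in\Lambda_a$ as the largest element of $\Lambda_a$ for which $\mbox{bias}(\lambda_\star)\leq \delta_{\lambda_\star}$, with $\mbox{bias}(\lambda)$ defined via \eqref{bvdecomposition}. Under $f\in\Sigma_d(s,L)$ and \textbf{MA}$(\kappa)$, the standard kernel-approximation argument yields $\mbox{bias}(\lambda)\lesssim\lambda^{2s}$, so that $\lambda_\star\asymp(\log(n)/n)^{1/(2s+2\bar\beta)}$ and $\delta_{\lambda_\star}\asymp(\log(n)/n)^{s/(s+\bar\beta)}$, matching the announced rate.

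Next I would construct a favorable event $\Omega_n$ on which, for every $\lambda\in\Lambda_a$ and every pair of codebooks $(\bc,\bc')$ lying in suitable excess-risk shells around $\mathcal{M}$,
$$\big|(R_n^\lambda-R^\lambda)(\bc,\bc')\big|\leq \delta_\lambda.$$
This is obtained by the same Talagrand/Bernstein argument as in the proof of Theorem \ref{thm:nonadaptiveup}, using \textbf{NA}$(\rho,\beta)$ to bound the variance of $\gamma_\lambda$ and \textbf{MA}$(\kappa)$ to translate $L_2$ increments into excess risk. A Massart-style peeling in the excess-risk level combined with a union bound over $|\Lambda_a|=O(\log n)$ gives $\mathbb{P}(\Omega_n^c)=O(n^{-\alpha})$, with $\alpha>0$ tunable via the multiplicative constant inside $\delta_\lambda$.

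The core step is to prove $\hat\lambda\geq\lambda_\star$ on $\Omega_n$, which reduces to verifying the ERC constraint at $\lambda_\star$: for every $\lambda'\in\Lambda_a$ with $\lambda'\leq\lambda_\star$,
$$R_n^{\lambda'}(\hat{\bc}_{\lambda_\star})-R_n^{\lambda'}(\hat{\bc}_{\lambda'})\leq 3\delta_{\lambda'}.$$
I decompose the left-hand side as
$$\big[R(\hat{\bc}_{\lambda_\star})-R(\hat{\bc}_{\lambda'})\big] + (R^{\lambda'}-R)(\hat{\bc}_{\lambda_\star},\hat{\bc}_{\lambda'}) + (R_n^{\lambda'}-R^{\lambda'})(\hat{\bc}_{\lambda_\star},\hat{\bc}_{\lambda'}).$$
On $\Omega_n$ the stochastic term is $\leq \delta_{\lambda'}$; the bias term is $\lesssim (\lambda')^{2s}\leq(\lambda_\star)^{2s}\lesssim\delta_{\lambda_\star}\leq\delta_{\lambda'}$, since $\lambda\mapsto\delta_\lambda$ is decreasing; and the pure excess-risk term is at most $R(\hat{\bc}_{\lambda_\star},\bc^*)$, which on $\Omega_n$ satisfies the same bias-variance estimate as in the proof of Theorem \ref{thm:nonadaptiveup}, hence $\lesssim \delta_{\lambda_\star}\leq\delta_{\lambda'}$. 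Tuning the constant inside $\delta_\lambda$ so that the total is bounded by $3\delta_{\lambda'}$ yields $\hat\lambda\geq\lambda_\star$ on $\Omega_n$. Applying ERC at the pair $(\hat\lambda,\lambda_\star)$, together with $R_n^{\lambda_\star}(\hat{\bc}_{\lambda_\star})\leq R_n^{\lambda_\star}(\bc^*)$ and \eqref{bvdecomposition}, then produces $R(\hat{\bc}_{\hat\lambda},\bc^*)\lesssim\delta_{\lambda_\star}$ on $\Omega_n$; since $\gamma$ is bounded on $\mathcal{B}(0,1)$, the contribution on $\Omega_n^c$ is negligible and the claimed expectation bound follows.

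The main obstacle is the uniform deviation bound in Step 2: the peeling/localization carried out for a single bandwidth in the non-adaptive proof must now be performed \emph{simultaneously} over $\lambda\in\Lambda_a$ and over excess-risk levels, with the constant inside $\delta_\lambda$ chosen large enough that the union bound absorbs both $|\Lambda_a|=O(\log n)$ and the peeling layers. The extra $\log(n)$ factor appearing in the adaptive rate compared with Theorem \ref{thm:nonadaptiveup} is precisely the toll of this uniformity.
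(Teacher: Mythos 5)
Your proposal follows essentially the same route as the paper: define an oracle grid bandwidth $\lambda_\star$ (the paper's $\lambda^*_a$) via the bias-versus-threshold balance, verify on a high-probability event that the ERC constraint holds at $\lambda_\star$ (so $\hat\lambda\geq\lambda_\star$), then apply the ERC inequality at $(\hat\lambda,\lambda_\star)$ together with $R_n^{\lambda_\star}(\hat\bc_{\lambda_\star})\leq R_n^{\lambda_\star}(\bc^*)$ and the bias--variance decomposition. The one place you gloss over a genuine bookkeeping subtlety is the control of the bias and stochastic terms in the ERC verification: Propositions \ref{prop bias} and \ref{prop deviation massart} do not return a pure $\lambda^{2s}$ or pure $\delta_\lambda$ bound, but add a term proportional to $R(\bc,\bc^*)$; when you decompose $(R^{\lambda'}-R)(\hat\bc_{\lambda_\star},\hat\bc_{\lambda'})$ and $(R_n^{\lambda'}-R^{\lambda'})(\hat\bc_{\lambda_\star},\hat\bc_{\lambda'})$ through the oracle, terms $\tfrac{1}{2\epsilon}R(\hat\bc_{\lambda'},\bc^*)$ and $\tfrac{1}{A}R(\hat\bc_{\lambda'},\bc^*)$ appear which your crude step $R(\hat\bc_{\lambda_\star})-R(\hat\bc_{\lambda'})\leq R(\hat\bc_{\lambda_\star},\bc^*)$ throws away but which must actually be cancelled against the negative $-R(\hat\bc_{\lambda'},\bc^*)$ coming from $R(\hat\bc_{\lambda_\star},\hat\bc_{\lambda'})$. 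The paper resolves this by retaining that term and choosing $\epsilon=1/(2-\sqrt{2})$ and $A=\sqrt{2}$ so that the coefficient of $R(\hat\bc_{\lambda'},\bc^*)$ vanishes exactly; without that (or without instead proving $R(\hat\bc_{\lambda'},\bc^*)\lesssim\delta_{\lambda'}$ for every $\lambda'\leq\lambda_\star$, which also works but needs to be stated), the ERC constraint at $\lambda_\star$ is not clinched. Everything else — the oracle definition, the $t=2\log n$ choice, the union bound over $|\Lambda_a|=O(\log n)$, the final $O(n^{-1})$ bad-event contribution — matches the paper.
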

Theorem \ref{th adaptivity lambda} is an adaptive upper bound for the estimator $\hat{\bc}_{\hat{\lambda}}$, where  $\hat{\lambda}$ is chosen from the ERC selection rule \eqref{def:erc}. The estimator $\hat{\bc}_{\hat{\lambda}}$ is then adaptive w.r.t. the smoothness $ s $. This adaptive excess risk bound coincides with the non-adaptive previous result of Theorem \ref{thm:nonadaptiveup}, up to an extra log term. This is the prize to pay for the data-driven property of the procedure. 

Let us remind that it is standard to pay a $ \log(n) $ factor  in pointwise estimation (see \cite{Lepski90} and \cite{Brow_Low96}). However, it is well-known that there is no prize to pay for adaptivity in global estimation (e.g. $ L_p $-norm). In the problem of noisy clustering, the choice of $\lambda$ concerns the estimation of the density $f$. This estimation is used in the procedure of noisy $k$-means, where we plug $\hat{f}_\lambda$ into the true risk. At the first glance, we could conjecture that a global estimation of $f$ is sufficient. 
However, a closer look at the problem of noisy clustering, or more generally noisy classification, clearly highlights the nature of the estimation problem we have at hand: a pointwise problem of estimation of a density $f$ thanks to noisy or corrupted measurements (see \cite{Loustau12}, Theorem 3). As a result, we can conjecture that the result of Theorem \ref{th adaptivity lambda} is optimal, in the adaptive minimax sense, as in standard pointwise estimation. 

The threshold term $ \delta_\lambda $ - which comes from the control of the stochastic part of the excess risk, see Proposition \ref{prop deviation massart} - has the following form (see \eqref{threshold}):
$$
\delta_\lambda=C_{\mathrm{adapt}}\frac{ \lambda^{\text-2\bar\beta}\log(n)}{n},
$$
where the (large) constant $C_{\mathrm{adapt}}>0$  depends on the margin constant $ \constantmargin $ which is possibly unknown. Indeed, by definition, it depends on the underlying density $f$. In practice, we recommend a painstaking calibration of this constant. From the theoretical point of view, this constant could be chosen from the {\it propagation} method suggested by \cite{Spokoiny_Vial09}.

The proof of Theorem \ref{th adaptivity lambda}, given in Section \ref{sec:proofs}, is based on the standard  Lepski's principle  subject to
major modifications. This rule traditionally uses a comparison between estimators (possibly defined as minimizers) indexed by a nuisance parameter to get the  best one from the considered family. In our context, we have proposed to compare empirical risks indexed by the nuisance parameter (bandwidth) to control the performance of the selected risk minimizer in terms of excess risk. Note that this idea was suggested by \cite{Polzehl_Spokoiny06} via local likelihood Comparison with the Kullback divergence as measurement error (see next section for more details). To the best of our knowledge, it is the first time that the Lepski's principle is applied in statistical learning on this way. 

In the following section, we give sufficient conditions to apply the ERC method in the more general context of $M$-estimation depending on a nuisance parameter. Many examples could be considered in future works.

\section{ERC's extension}\label{sec:gen}
In this section, we propose an extension of the ERC selection rule \eqref{def:erc} to a more general context of $M$-estimation that we illustrate by some examples. To this end, let us introduce the random variable $\mathcal{Z}$ on some probability space $(\Omega,\mathcal{F},\P)$, with law $\P_{\mathcal{Z}}$ defined on $\mathcal{X}$. Given a sample of i.i.d. random variables $ \cZ_1,\dots,\cZ_n\in\mathcal{X}$ with probability law $\P_{\mathcal{Z}}$, we construct an empirical risk denoted by $ \cR^\lambda_n(\cdot) $, where $ \lambda>0 $ is a bandwidth to choose via ERC method. This parameter can be the bandwidth of a standard kernel in a localization approach (see Examples 4. and 5.), or the bandwidth of a deconvolution kernel in error-in-variables models (see Examples 1., 2. and 3.). We consider the collection of $M$-estimators:
\begin{equation}
\label{genMest}
 \hat{g}_n^\lambda\in\argmin_{g\in\cG}\cR^\lambda_n(g),~~\lambda>0,
\end{equation}
where $ \cG $ is a fixed family of candidates that depends on the considered problem. Given the collection of estimators \eqref{genMest}, we focus on the selection of the nuisance parameter $\lambda>0$ appearing in the empirical risk. Without loss of generality, we assume that $ \lambda$ belongs to the set $ \Delta_n= [\lambda_-,\lambda^+] $, where  $\lambda_-\leq\lambda^+$ and $ \lambda_-,\lambda^+\to0 $ as $ n\to\infty $.

The risk of the estimators \eqref{genMest} is measured thanks to a quantity $\mathcal{R}( \hat{g}_n^\lambda)$, called the true risk. To obtain good properties for $\hat{g}_n^\lambda$, the first condition on $\mathcal{R}_n^\lambda(\cdot)$ is that it has to be an asymptotically unbiased estimator of the true risk $\mathcal{R}(\cdot)$. To this end, for any fixed estimator $g\in\mathcal{G}$,  we introduce the expectation of the empirical risk as $ \cR^\lambda(g)=\bE \cR^\lambda_n(g), $
where $ \bE $ denotes the expectation w.r.t. the product probability of the training set $(\mathcal{Z}_1,\ldots, \mathcal{Z}_n)$. 
Then, the empirical risk has the following property:
$$
\lim_{n\to\infty}\bE\cR^\lambda_n(g)=\mathcal{R}(g),\mbox{ for any fixed }g\in\mathcal{G}.
$$
We also denote as $g^*$ the best possible decision rule in $\mathcal{G}$, called an oracle, defined as:
$$
g^*\in\argmin_{g\in\cG}\cR(g).
$$
Eventually, the performance of the estimate $ \hat{g}_n^{\lambda} $ is measured via the excess risk
$ \cR(\hat{g}_n^{\lambda},g^*):= \cR(\hat{g}_n^{\hat \lambda})- \cR(g^*) $.
\\

As illustrated in Section \ref{sec:nonadaptive} for the particular case of noisy clustering, the performances of $\hat{g}_n^\lambda$ in terms of excess risk is strongly related with the behaviour of $\mathcal{R}_n^\lambda(g)$ as an estimator of the true risk $\mathcal{R}(g)$. The message of this section is the following : if the behaviour of $\cR^\lambda_n(g)$ in \eqref{genMest} can be decomposed into a standard bias-variance decomposition, an adaptive choice $\hat \lambda$ can be proposed using the ERC rule. It leads to the following assumptions that are sufficient conditions to propose an optimal adaptive procedure:
\\

\noindent\textbf{Bias/Variance Conditions.} \\

{\it 
{\it 1.}
There exists an increasing function denoted by $ \textbf{Bias}(\cdot) $ such that:
$$
\big|(\cR^\lambda-\cR)(g,g^*)\big|\leq \textbf{Bias}(\lambda) +\frac 1 4 \cR(g,g^*),~~\text{for all}~ g\in\cG~\text{and}~\lambda\in\Delta.
$$

{\it 2.} There exists a decreasing function denoted by $ \textbf{Var}_t(\cdot) $ ($ t\geq0 $) such that:
$$
\bP\left(\sup_{g\in\cG}\left\{\big|(\cR^\lambda_n-\cR^\lambda)(g,g^*)\big|-\frac 1 4 \cR(g,g^*)\right\}>\textbf{Var}_t(\lambda)\right)\leq e^{\text- t},~~\text{for all}~ \lambda\in\Delta ~\text{and}~ t\geq0.
$$}
\\

\noindent The above conditions ensure a control of the excess risk using the bias-variance decomposition \eqref{bvdecomposition}. It allows to find a quantity $\lambda^*$ which trades off both terms $\textbf{Bias}(\lambda)$ and $\textbf{Var}_t(\lambda)$. In particular, using \eqref{bvdecomposition},  Bias/Variance conditions {\it 1.} and {\it 2.} lead to the following exponential inequality:
$$
\P\Big(\mathcal{R}(\hat g_n^\lambda)-\mathcal{R}(g^*)\geq 2\textbf{Bias}(\lambda)+2\textbf{Var}_t(\lambda)\Big)\leq e^{\text- t},~~\text{for all}~ t\geq0.
$$

A close inspection of the proof of
Theorem \ref{th adaptivity lambda} highlights that Bias/Variance Conditions are sufficient  to show the adaptive property of the ERC rule in this general context as follows. Let $ \Delta_a $ be a discrete exponential net on the set $ \Delta $. Then, the general ERC rule is given by:
\beqn
\label{def general erc}
\hat\lambda=\max\left\{\lambda\in\Delta_a\::\:\cR_n^{\lambda'}(\hat g^{\lambda}_n)-\cR_n^{\lambda'}(\hat g^{\lambda'}_n)\leq 8\textbf{Var}_t(\lambda),\:\forall {\lambda'}\leq{\lambda}\right\}.
\eeqn
Thus, using the latter Bias/Variance conditions, one can establish
oracle inequatilies for the selection rule \eqref{def general erc} as follows:
\begin{theorem}
\label{gentheo}
Suppose the Bias/Variance Conditions 1. and 2. hold. Then, there exists a universal constant $ C_3 $ such that
$$
\mathbb{E} \cR(\hat g^{\hat\lambda}_n,g^*)\leq C_3\left(\inf_{\lambda\in\Delta}\Big\{\textbf{Bias}(\lambda)+\textbf{Var}_t(\lambda)\Big\}+e^{\text- t}\right),~~\text{for all}~ t\geq0,
$$
where $\hat\lambda$ is chosen in \eqref{def general erc}.
\end{theorem}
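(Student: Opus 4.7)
The plan is to replay, in this abstract setup, the Lepski-type argument used in the proof of Theorem~\ref{th adaptivity lambda}. First, I would introduce the high-probability event
\begin{equation*}
\Omega := \bigcap_{\lambda\in\Delta_a}\left\{\sup_{g\in\cG}\left[\bigl|(\cR_n^\lambda-\cR^\lambda)(g,g^*)\bigr|-\tfrac{1}{4}\cR(g,g^*)\right]\leq \textbf{Var}_t(\lambda)\right\},
\end{equation*}
on which Bias/Variance Condition~2 holds simultaneously for all $\lambda\in\Delta_a$; a union bound gives $\P(\Omega^c)\leq|\Delta_a|\,e^{-t}$. Combining the two Conditions via the triangle inequality yields on $\Omega$ the key \emph{sandwich} inequality, valid for every $g\in\cG$ and $\lambda\in\Delta_a$:
\begin{equation*}
\tfrac{1}{2}\cR(g,g^*)-\textbf{Bias}(\lambda)-\textbf{Var}_t(\lambda)\;\leq\;\cR_n^\lambda(g,g^*)\;\leq\;\tfrac{3}{2}\cR(g,g^*)+\textbf{Bias}(\lambda)+\textbf{Var}_t(\lambda).
\end{equation*}
Applied to the empirical minimizer $g=\hat g_n^\lambda$ (for which $\cR_n^\lambda(\hat g_n^\lambda,g^*)\leq 0$), the lower side produces the \emph{non-adaptive} bound $\cR(\hat g_n^\lambda,g^*)\leq 2\bigl[\textbf{Bias}(\lambda)+\textbf{Var}_t(\lambda)\bigr]$ for every $\lambda\in\Delta_a$.

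Next, I would fix an oracle $\lambda^*\in\Delta_a$ essentially minimizing $\textbf{Bias}(\lambda)+\textbf{Var}_t(\lambda)$ and, in particular, satisfying a balance of the form $\textbf{Bias}(\lambda^*)\leq c\,\textbf{Var}_t(\lambda^*)$ for a small absolute constant $c$ to be tuned. When $\hat\lambda\geq\lambda^*$, applying the ERC rule \eqref{def general erc} with $\lambda'=\lambda^*$ and combining with $\cR_n^{\lambda^*}(\hat g_n^{\lambda^*},g^*)\leq 0$ gives $\cR_n^{\lambda^*}(\hat g_n^{\hat\lambda},g^*)\leq 8\,\textbf{Var}_t(\lambda^*)$; inserting this into the lower sandwich at $(\hat g_n^{\hat\lambda},\lambda^*)$ yields $\cR(\hat g_n^{\hat\lambda},g^*)\leq 18\,\textbf{Var}_t(\lambda^*)+2\,\textbf{Bias}(\lambda^*)$. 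When $\hat\lambda<\lambda^*$, maximality in \eqref{def general erc} forces some $\lambda'\leq\lambda^*$ to fail the test, namely $\cR_n^{\lambda'}(\hat g_n^{\lambda^*})-\cR_n^{\lambda'}(\hat g_n^{\lambda'})>8\,\textbf{Var}_t(\lambda')$. Bounding the left-hand side by the upper sandwich at $\hat g_n^{\lambda^*}$, the lower sandwich at $\hat g_n^{\lambda'}$, and the non-adaptive bound on $\cR(\hat g_n^{\lambda^*},g^*)$, I would obtain an upper bound of the form $3\,\textbf{Bias}(\lambda^*)+3\,\textbf{Var}_t(\lambda^*)+2\,\textbf{Bias}(\lambda')+2\,\textbf{Var}_t(\lambda')$. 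Since $\textbf{Bias}$ is increasing, $\textbf{Var}_t$ decreasing, $\lambda'\leq\lambda^*$, and $\textbf{Bias}(\lambda^*)\leq c\,\textbf{Var}_t(\lambda^*)\leq c\,\textbf{Var}_t(\lambda')$, this quantity is bounded by $(5+5c)\,\textbf{Var}_t(\lambda')$, strictly less than $8\,\textbf{Var}_t(\lambda')$ for $c<3/5$, yielding a contradiction. Hence only the first case occurs on $\Omega$.

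To close, I would use the natural boundedness of $\cR(\hat g_n^{\hat\lambda},g^*)$ by some deterministic constant $M$ (automatic in the clustering setting via the peak-power constraint, and standard in the other $M$-estimation instances of Section~\ref{sec:gen}) and write
\begin{equation*}
\bE\cR(\hat g_n^{\hat\lambda},g^*)\;\leq\;\bE\bigl[\cR(\hat g_n^{\hat\lambda},g^*)\mathbf{1}_\Omega\bigr]+M\,\P(\Omega^c)\;\leq\;C'\bigl\{\textbf{Bias}(\lambda^*)+\textbf{Var}_t(\lambda^*)\bigr\}+M|\Delta_a|\,e^{-t}.
\end{equation*}
Replacing $\lambda^*$ by the infimum over $\Delta$ (paying only a constant factor $a$ for the discretization) and absorbing $M|\Delta_a|$, of polylogarithmic order in the sample size, into a single constant $C_3$ delivers the claimed oracle inequality. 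The main obstacle is the constant-tracking in the second case: the $\tfrac{3}{2}$ factors introduced by the sandwich accumulate, and it is only the balance property of the oracle $\lambda^*$ together with the monotonicity of $\textbf{Bias}$ and $\textbf{Var}_t$ that permit the resulting bound to remain strictly below $8\,\textbf{Var}_t(\lambda')$, which is what rules out $\hat\lambda<\lambda^*$ on the good event.
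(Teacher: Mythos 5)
Your proposal faithfully abstracts the proof of Theorem~\ref{th adaptivity lambda}, which is exactly what the authors indicate when they omit the proof of Theorem~\ref{gentheo}: the good event as an intersection of the variance events, the sandwich inequality obtained by combining the two Bias/Variance Conditions, the non-adaptive bound from $\cR_n^\lambda(\hat g_n^\lambda,g^*)\le 0$, and the dichotomy on $\hat\lambda$ versus an approximate balance point $\lambda^*$ with a contradiction ruling out $\hat\lambda<\lambda^*$ all correspond to the arithmetic with Propositions~\ref{prop bias} and \ref{prop deviation massart} in the concrete noisy-clustering proof. One point worth flagging explicitly: you read the threshold in \eqref{def general erc} as $8\,\textbf{Var}_t(\lambda')$, i.e.\ depending on the inner index, which is the reading consistent with the concrete rule \eqref{def:erc} (where the threshold is $3\threshold{\lambda'}$) and is the one under which your exclusion $(5+5c)\,\textbf{Var}_t(\lambda')<8\,\textbf{Var}_t(\lambda')$ closes; the displayed \eqref{def general erc} has $8\,\textbf{Var}_t(\lambda)$, a smaller quantity since $\textbf{Var}_t$ is decreasing and $\lambda'\le\lambda$, under which that step would fail, so the display appears to be a typo and your interpretation is the correct one. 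Two smaller caveats that you correctly anticipate but should be stated as additional hypotheses: the final integration step needs an a priori deterministic bound $M$ on the excess risk, which is not among the stated Conditions, and the union bound produces a factor $|\Delta_a|$ in front of $e^{-t}$ that the constant $C_3$ in the theorem must absorb (or one reparametrizes $t\mapsto t+\log|\Delta_a|$); both are implicitly handled in the concrete Theorem~\ref{th adaptivity lambda}, where $t=2\log n$ renders $|\Lambda_a|/n^2$ negligible, but in the abstract statement they deserve to be spelled out.
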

We omit the proof of Theorem \ref{gentheo} since it can be deduced coarselly from the proof of Theorem \ref{th adaptivity lambda}. This result allows to get a control of the expected excess risk of $\hat{g}_n^{\hat\lambda}$ via a data-driven parameter $\hat\lambda$. The adaptive estimator performs as well as the best one, i.e. the one minimizing the Bias/Variance trade-off in the family $ \{\hat g^{\lambda}_n,~{\lambda\in\Lambda}\} $. This result could be of great interest in  many statistical learning context. We now give some examples in the context of noisy data, and then we turn out into the problem of local $M$-estimation.

\paragraph{Example 1: Noisy Clustering.} The framework of Section \ref{sec:nonadaptive} exactly falls into the general model of this section. Indeed, in the problem of clustering with noisy inputs \eqref{noisydata}, the empirical risk is defined as in \eqref{def empirical risk}:
$$
\cR^\lambda_n(\bc)=\sino\int\noyaudeconvolution{\lambda}{{Z_i-x}}\min_{j=1,\dots,k}\|x-c_j\|_2^2 dx
$$
where $ \bc=(c_1,\dots,c_k)\in\bR^{dk} $ and $\lambda>0$ is the bandwidth of the deconvolution estimator. Conditions {\it 1.} and {\it 2.} correspond to Propositions \ref{prop bias} and \ref{prop deviation massart} (see Section \ref{sec:proofs}). 
\paragraph{Example 2: Discriminant Analysis (\cite{pinkfloyds}).}
The model of discriminant analysis is the supervised counterpart of the clustering problem. Suppose we have at hand two samples $Z_1^1,\ldots, Z_n^1$ and $Z^2_1,\ldots, Z^2_m$ of the form \eqref{noisydata}, with density $p*\eta$ and $q*\eta$. The aim is to design a decision set $G\subset\R^d$ such that if $x\in G$, $x$ is associated to the density $p$, and $q$ otherwise. In this context, \cite{pinkfloyds} states minimax fast rates of convergence for the excess risk of classsification $\mathcal{R}(G)-\mathcal{R}(G^*)$, where $\mathcal{R}(G)$ is defined as:
$$
\mathcal{R}(G)=\frac{1}{2}\left[\int_{G^C} p(x)dx+\int_G q(x)dx\right],
$$
and $ G^*\in\argmin_{G}\mathcal{R}(G) $.
Using a deconvolution kernel of the form \eqref{def deconvolution kernel}, we can introduce an asymptotically unbiased estimator of $\mathcal{R}(G)$ given by:
$$
\mathcal{R}_n^\lambda(G)=\frac{1}{2}\left[\int_{G^C} \hat{p}_\lambda(x)dx+\int_G \hat{q}_\lambda(x)dx\right],
$$
where $\hat{p}_\lambda$ and $\hat{q}_\lambda$ are deconvolution kernel estimators of $p$ and $q$ with given bandwidth $\lambda>0$. We also define the estimated decision set $ \hat{G}_n^\lambda\in\argmin_{G}\mathcal{R}_n^\lambda(G) $. In this context, \cite{pinkfloyds} have proved minimax fast rates of convergence for the excess risk $\mathcal{R}(\hat{G}_n^\lambda)-\mathcal{R}(G^*)$, using a bias-variance decomposition as in \eqref{bvdecomposition}. The generalization of ERC in this particular minimax framework illustrates that a minimax adaptive excess risk bounds could be stated by using the ERC rule \eqref{def general erc}, up to an extra log term.

\paragraph{Example 3: Quantile estimation (\cite{quantileestimation}).} Given noisy data \eqref{noisydata}, the objective of quantile estimation is to estimate a $\tau$-quantile of the distribution $P_X$, which is given by:
$$
q_\tau:=\arg\min_{\eta\in\R}\int_{-\infty}^{+\infty}(x-\eta)(\tau-\ind_{x\leq\eta})f(x)dx=:\arg\min_{\eta\in\R}\cR(\eta),
$$
where $\tau\in(0,1)$. \cite{quantileestimation} proposes to estimate $q_\tau$ using a deconvolution kernel estimator of the form \eqref{def deconvolution kernel}. The proposed estimator is defined as:
$$
\hat{q}_\tau^\lambda=\arg\min_{\eta\in\R}\int_{-\infty}^{+\infty}(x-\eta)(\tau-\ind_{x\leq\eta})\hat{f}^\lambda_n(x)dx=:\arg\min_{\eta\in\R}\cR_n^\lambda(\eta),
$$
where $\hat{f}_\lambda(\cdot)$ is given in \eqref{decest} with  bandwidth $ \lambda>0$. Interestingly, they prove convergence rates for the quantity $|q_\tau-\hat{q}_\tau^\lambda|$, (provided that $f\in\Sigma(s,L)$) for an optimal bandwidth $\lambda:=\bar{\lambda}(s)$ which trades off a bias-variance decomposition similar to \eqref{bvdecomposition}. As usually, the choice is non-adaptive, and the authors use the standard Lepski's procedure to give adaptive upper bounds for the measurement error $|q_\tau-\hat{q}_\tau^\lambda|$. 

We conjecture that Theorem \ref{gentheo} could be stated from the previous mentioned work. Thus, ERC method \eqref{def general erc} could be used to obtain sharper risk bounds, i.e. adaptive upper bounds for the excess risk $\mathcal{R}(\hat{q}_\tau^{\hat\lambda})-\mathcal{R}(q_\tau)$. In particular, this result could recover \cite{quantileestimation}.
\paragraph{Example 4: Local Fitted Likelihood (\cite{Polzehl_Spokoiny06}).} Let us introduce a sample of independent random variables $\mathcal{Z}_i=(X_i,Y_i)\in[0,1]^n\times\bR^{n},$ $i=1,\ldots ,n$, where $P_{\theta_i}$ denotes the distribution of $Y_i$ with given probability density $ p(\cdot,\theta_i) $, with a parameter $\theta_i=\theta(X_i)$. In \cite{Polzehl_Spokoiny06}, the aim is to estimate the quantity $ \theta(x) $ at a given point $x$ (pointwise estimation).
This model contains standard nonparametric problems such as, for example:
\begin{itemize}
 \item Gaussian regression where $Y_i=\theta(X_i)+\epsilon_i$, $i=1,\ldots, n$, with gaussian errors $\epsilon_i$;
\item Binary classification model where $\theta(x)=\P(Y_i=1|X=x)$;
\item Inhomogeneous exponential model $ p(\cdot,\theta_i)=e^{-\cdot/\theta_i}/\theta_i $;
\item Inhomogeneous Poisson $ \P(Y_i=k|X_i)=\theta^k_ie^{-k/\theta_i}/k!,~k\in\bN $.
\end{itemize}
In this problem, one usually applies the local version of the well-known likelihood method. It gives rise to the minimization of a localized, negative log-likelihood as follows:
$$
\hat{\theta}^\lambda\in\argmin_{t\in\bR}\frac{1}{n}\sum_{i=1}^n-\log \left(p(Y_i,t)\right)\frac{1}{\lambda}\mathcal{K}\left(\frac{X_i-x}{\lambda}\right),
$$
where $\mathcal{K}(\cdot)$ is a kernel function and $ \lambda>0 $ plays the role of a bandwidth. In this context, the localized likelihood depends on a bandwidth. Moreover, in such a framework, the accuracy of a given $\hat \theta^\lambda$ can be measured thanks to the Kullback-Leibler divergence between the two probability $P_\theta$ and $P_{\hat\theta^\lambda}$, given by
$$
\mathcal{K}(\theta,\hat\theta^\lambda)=\mathbb{E}_{P_\theta}\log\frac{ p(Y,\theta)}{ p(Y,\hat\theta^\lambda)}.
$$
\cite{Polzehl_Spokoiny06} proposes a data-driven selection of $ \lambda $ using Lepski's principle comparing localized, negative log-likelihoods. This rule is similar to ERC method \eqref{def general erc} where the empirical risk corresponds to the localized, negative log-likelihood, which is an asymptotically unbiased estimator of the risk $\cR(\cdot):=-\bE_{P_\theta}\log p(Y,\cdot)$. In this framework, the excess risk $ \cR(\hat{\theta}^\lambda)-\cR(\theta) $ associated to the likelihood coincides with the Kullback-Liebler divergence:
$$
\cR(\hat{\theta}^\lambda)-\cR(\theta)=\mathcal{K}(\theta,\hat\theta^\lambda).
$$
Eventually, we note that Bias/Variance Conditions can be established thanks to a margin assumption $ \mathcal{K}(\theta,\hat\theta^\lambda)\sim I^{\text-1} |\theta,\hat\theta^\lambda| $, where $ I $ denotes the Fisher information. Then, ERC method is relevant and coincides with the adaptive method introduced by \cite{Polzehl_Spokoiny06}.
\paragraph{Example 5: Robust M-estimation (\cite{chichibernoulli}).} Let us consider nonparametric regression where we observe a sequence of i.i.d. random variables $ \cZ_i=(X_i,Y_i)\in[0,1]^n\times\bR^n $, $i=1,\ldots ,n$ satisfying:
$$ Y_i=f(X_i)+\xi_i,
$$
 where the design $ X_i $ is uniformly distributed on $ [0,1] $ and the noise $\xi_i$ is symmetric, possibly heavy-tailed and such that $ \bP(\text-1\leq\xi_i\leq1)>0 $. We are interested into the pointwise estimation of the regression function $ f:[0,1]\to\bR $ at a given $ x_0 $. In such a framework, localized approaches (e.g. with a kernel function) are usually used. In particular, we can focus on the local Huber estimator as follows:
$$
\hat f_\lambda(x_0):=\argmin_{t\in\bR}\sino \rho_{H}(Y_i-t)\frac{1}{\lambda}\mathcal{K}\left(\frac{X_i-x_0}{\lambda}\right),
$$
where $ \rho_{H}(u)=(u^2/2)\1_{|u|\leq1}+(|u|-1/2)\1_{|u|>1} $ is the Huber contrast (cf. \cite{Huber64}), $\mathcal{K}(\cdot)$ is a kernel function and $ \lambda>0 $ is the so-called bandwidth. The estimator was recently investigated by \cite{chichibernoulli} for its robustness properties. 

Following the approach of Theorem \ref{gentheo}, we can use the general ERC rule to get adaptive upper bounds for the mean square error of $ \hat f_{\hat\lambda}(x_0) $. Indeed, as a first step, since the Huber contrast is smooth enough, a margin assumption holds:
$$
\cR(t,f(x_0))\sim \frac{|t-f(x_0)|}{\bE\rho''(\xi)}, 
$$ 
for $ t $ closed to $ f(x_0) $. Then, from a careful look at the proof of  \cite{chichibernoulli}, there is nice hope that the Bias/Variance Conditions are satisfied. Eventually, a direct application of Theorem \ref{gentheo} is possible.
\section{Conclusion}
\label{sec:conclusion}
This paper could be seen as a first step into the study of adaptive noisy clustering. Several problems remain open and could be the core of future works.\\

Firstly, we obtain in Theorem \ref{thm:nonadaptiveup} a non-adaptive excess risk bound in clustering with noisy data for a collection of deconvolution empirical risk minimizers. This bound highlights the presence of fast rates of convergence, which improves the previous result stated in \cite{Loustau12bis} using Koltchinskii's localization approach (see \cite{kolt}). Here, fast rates of the form $\mathcal{O}(n^{-s/(\bar{\beta}+s)})$ are obtained, where $s>0$ is the H\"older regularity of the density $f$ and $\bar\beta$ deals with the asymptotic behaviour of the characteristic function of the noise. These rates are reached for a non-adaptive bandwidth choice $\bar\lambda= n^{-1/(2\bar{\beta}+2s)}$. Then, we turn out into the main issue of the paper: the adaptive choice of the bandwidth of the estimator. We introduce a new selection rule based on Lepski's heuristic, where empirical risks are compared instead of estimators. This rule, called ERC, allows us to get an adaptive excess risk bound which coincides 
with the optimal upper bound (of Theorem \ref{thm:nonadaptiveup}), up to a $\log(n)$ term. This prize to pay for the adaptivity seems to be optimal, as discussed at the end of Theorem \ref{th adaptivity lambda}. 

The introduction of the ERC rule in noisy clustering leads to several open problems. First of all, the proposed selection rule suffers from the dependence on the threshold term $\delta_\lambda$ in \eqref{threshold}, which depends on unknown constants, such as the margin constant $\kappa$ in \textbf{MA($\kappa$)}. An interesting but challenging open problem is to investigate the adaptivity with respect to the margin assumption. Moreover, this threshold is not realistic and a precise calibration of this term is of practical interest. In this direction, it could be interesting to develop the propagation method presented in \cite{Spokoiny_Vial09}.
Another interesting direction is to extend the result of this paper to the anisotropic case. From \cite{Loustau12bis}, we know the presence of fast rates of convergence for the collection of noisy $k$-means, where the density $f$ has an anisotropic H\"older regularity. These results are quiet similar to the isotropic case, except that the choice of the bandwidth is more nasty. Exactly as in a deconvolution framework, in the anisotropic case, the optimal bandwidth is not the same in each direction. As a result, the problem of adaptivity is more difficult in this case, since we have to consider a $d$-dimensional grid of parameters $\lambda_j$, $j=1, \ldots, d$. For this purpose, the application of a ERC based on \cite{Goldenshluger_Lepski11} is a challenging open problem.

 The construction of an algorithm to compute the ERC rule is also of first interest. This could be done thanks to the recent developments stated in \cite{bl12}, where a noisy $k$-means algorithm is proposed. Then, it could be interesting to test over simulated as well as real datasets the problem of choosing the bandwidth in the algorithm. An implementation of the ICI algorithm will be efficient to avoid the calculation of all the estimators in the collection of noisy $k$-means.\\

Finally, in this contribution, the Lepski's heuristic is introduced for the first time to get adaptive excess risk bounds in statistical learning. As discussed in Section \ref{sec:gen}, the data-driven choice of the bandwidth proposed in this paper can be applied to a more general context of $M$-estimation, where a nuisance parameter appears in the empirical risk. In this general context, ERC appears to be useful to get adaptive excess risk bounds, provided that a suitable bias-variance decomposition is available for the empirical risk (see Theorem \ref{gentheo}). We conjecture that the guiding thread of this paper could be use in a variety of statistical models, where parameter selection is involved. 

\section{Proofs}\label{sec:proofs}

\subsection{Basic Results}
Proofs of Theorem \ref{thm:nonadaptiveup}-\ref{th adaptivity lambda} are based on the following two basic propositions.

\begin{proposition}[Bias control]
\label{prop bias}
Suppose $f\in\Sigma_d(s,L)$. Let $\mathcal{K}(\cdot)$ a kernel of order $\lfloor s\rfloor$ with respect to $\nu$, and assume that {\bf MA}$(\constantmargin)$ is satisfied for some positive constant $\constantmargin$. For any $ \epsilon>0 $ and any $ \lambda>0 $, it holds:
\begin{equation*}
\,\left|(R-R^\lambda)(\bc,\bc^{*})\right|\leq \constbias\constantmargin\epsilon\lambda^{2s}+\frac{1}{2\epsilon}R(\bc,\bc^{*}),~~\text{for all}~ \bc\in\cC,
\end{equation*}
where $\constbias=16[\mathcal{V}(d)]^2\left( \int \big|\mathcal{K}(u)\big|\frac{Ld|u|^{s}}{l!}du\right)^2$ with $\mathcal{V}(d)=\pi^{d/2}/\Gamma(d/2+1)$ and $\Gamma(\cdot)$ stands for the Gamma function.
\end{proposition}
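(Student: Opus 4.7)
My plan is to reduce the deconvolution bias to a standard kernel-smoothing bias, and then to absorb the resulting factor $\nno\bc-\bc^*\nno$ into the excess risk by means of the margin assumption.

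First I will identify $\bE[R_n^\lambda(\bc)]$ explicitly. Using the Fourier characterization \eqref{def deconvolution kernel}, a direct Parseval computation gives
$$\bE[\hat f_\lambda(x)] = (\bar{\cK}_\lambda * f)(x),$$
where $\bar{\cK}_\lambda(u) := \lambda^{\text-d}\cK(u/\lambda)$ is the ordinary rescaled kernel (no deconvolution). Setting $\phi_\bc(x) := \gamma(\bc,x) - \gamma(\bc^*,x)$ and $\Delta_\lambda(x) := f(x) - (\bar{\cK}_\lambda * f)(x)$, this yields the identity
$$(R - R^\lambda)(\bc,\bc^*) = \int_{\cB(0,1)} \phi_\bc(x)\,\Delta_\lambda(x)\,dx,$$
which holds without any regularity or margin hypothesis.

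I will then apply Cauchy--Schwarz on $\cB(0,1)$ to separate the two factors. For the factor involving $\phi_\bc$, the peak power constraint $\bc,\bc^*\in\cB(0,1)^k$ yields the uniform Lipschitz estimate $|\phi_\bc(x)|\leq 4\nno\bc-\bc^*\nno$ on $\cB(0,1)$, so $\no\phi_\bc\no_{L_2(\cB(0,1))}\leq 4\mathcal V(d)^{1/2}\nno\bc-\bc^*\nno$; combined with {\bf MA}$(\kappa)$ this is at most $4\mathcal V(d)^{1/2}\sqrt{\kappa R(\bc,\bc^*)}$. For the factor involving $\Delta_\lambda$, a pointwise Taylor expansion of $f\in\Sigma_d(s,L)$ at order $\lfloor s\rfloor$ combined with the vanishing moments of $\cK$ up to order $\lfloor s\rfloor$ produces
$$|\Delta_\lambda(x)| \leq \lambda^s\int|\cK(u)|\frac{Ld|u|^s}{l!}\,du,$$
hence $\no\Delta_\lambda\no_{L_2(\cB(0,1))}\leq \mathcal V(d)^{1/2}\lambda^s\int|\cK(u)|\frac{Ld|u|^s}{l!}\,du$. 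Multiplying the two factors and applying Young's inequality $ab\leq (\epsilon/2)a^2 + (2\epsilon)^{\text-1}b^2$ with $b^2=R(\bc,\bc^*)$ then produces the claimed bound with the constant $\zeta_1$ of the statement.

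The main obstacle is the pointwise estimate on $\Delta_\lambda$ in the \emph{isotropic} Hölder class: one must combine the multidimensional Taylor remainder---for $|p|=\lfloor s\rfloor$, the isotropic condition gives $|\partial^p f(x-\lambda u)-\partial^p f(x)|\leq L\sum_{v=1}^d|\lambda u_v|^{s-\lfloor s\rfloor}$---with the cancellation $\int u^p\cK(u)\,du=0$ for $|p|\leq \lfloor s\rfloor$ (guaranteed by the order of $\cK$), so that the lower-order Taylor contributions vanish and only the clean $\lambda^s$ remainder survives. The remaining ingredients---the Fourier identity for $\bE[\hat f_\lambda]$, the Lipschitz bound for $\gamma$ on the ball, the margin reduction, and Young's inequality---are essentially routine.
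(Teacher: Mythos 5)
Your proof is correct and follows essentially the same route as the paper: you start from the same unbiasedness identity $\bE\hat f_\lambda = \bar\cK_\lambda * f$, use the same Taylor expansion plus vanishing moments to get the $\lambda^s$ bias, absorb $\nno\bc-\bc^*\nno$ via \textbf{MA}$(\kappa)$ and apply Young's inequality. The only cosmetic difference is that you split $\int_{\cB(0,1)}\phi_\bc\,\Delta_\lambda$ with $L_2$--$L_2$ Cauchy--Schwarz, whereas the paper uses the $L_\infty$--$L_1$ Hölder estimate ($\sup_x|\Delta_\lambda(x)|$ against $\int|\phi_\bc|$); both give the same product $\mathcal V(d)\cdot 4\nno\bc-\bc^*\nno\cdot\lambda^s\int|\cK(u)|\frac{Ld|u|^s}{l!}du$ and hence the same constant $\zeta_1$ up to harmless slack.
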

\begin{proof}
We consider the case $d=1$ for simplicity. Using the elementary property $\E_{P_Z} \noyaudeconvolution{\lambda}{Z-x}=1/\lambda\E_{P_X} \mathcal{K}\left((X-x)/\lambda\right)$, gathering with Fubini, we can write:
\begin{equation*}
(R^\lambda-R)(\bc,\bc^{*})
=\int_{\cB(0,1)}\big(\gamma(\bc,x)-\gamma(\bc^*,x)\big)\int_{\bR} \mathcal{K}(u)\big(f(x+\lambda u)-f(x)\big)dudx,
\end{equation*}
Now, since $f$ has $l=\lfloor s\rfloor$ derivatives and $ \mathcal{K}(\cdot) $ a kernel of order $ l $, there exists $\tau\in ]0,1[$ such that:
\beqnn
\int_{\bR} \mathcal{K}(u)\left(f(x+\lambda u)-f(x)\right)du&\leq& \int_{\bR} \mathcal{K}(u)\left(\sum_{k=1}^{l-1}\frac{f^{(k)}(x)}{k!}(\lambda u)^k+\frac{f^{(l)}(x+\tau\lambda u)}{l!}(\lambda u)^l\right)du\\
&\leq & \int_{\bR} \mathcal{K}(u)\left(\frac{(\lambda u)^l}{l!}(f^{(l)}(x+\tau\lambda u)-f^{(l)}(x))\right)du\\
&\leq & \lambda^s \int_{\bR} \big|\mathcal{K}(u)\big|\frac{L|u|^{s}}{l!}du,
\eeqnn
where we use in last line the H\"older smoothness of  $f$. From \eqref{eq bound m} and {\bf MA}($\constantmargin$), we have that 
$ |\gamma(\bc,X)-\gamma(\bc^*,X)|\leq 4\nno\bc-\bc^*\nno\leq 4\sqrt{\constantmargin R(\bc,\bc^*) }$ for any $ \bc\in\cC $. We then have for any $\epsilon>0$:
\begin{align*}
(R^\lambda-R)(\bc,\bc^{*})
&\leq \int_{\bR} \big|\mathcal{K}(u)\big|\frac{L|u|^{s}}{l!}du~\lambda^s\int_{\cB(0,1)}|\gamma(\bc,x)-\gamma(\bc^*,x)|dx\\
&\leq8\int_{\bR} \big|\mathcal{K}(u)\big|\frac{L|u|^{s}}{l!}du~\lambda^s\sqrt{\constantmargin R(\bc,\bc^*)}\\
&\leq \left(8\int_{\bR} \big|\mathcal{K}(u)\big|\frac{L|u|^{s}}{l!}du\right)^2\epsilon\constantmargin \lambda^{2s}+\frac{1}{2\epsilon}R(\bc,\bc^{*}),\\
\end{align*}
where the last inequality comes from Young's inequality:
$$
xy^a\leq x^{1/(1-a)}+ ay,~\forall a<1,~\forall x,y>0,
$$
with $a=\frac{1}{2}$.

The same algebra in the $d$-dimensional case leads to the definition of constant $\constbias$.
\end{proof}

\begin{proposition}[Variance control]
\label{prop deviation massart}
Assume that  $\constantsupconvolution:=\no \eta\no_\infty<\infty $, and {\bf NA}$(\constantFourierPolynomial,\beta)$ and {\bf MA}$(\constantmargin)$ are satisfied for some $ \beta\in(0,\infty)^d $, $\constantFourierPolynomial>0$, and for some positive constant $\constantmargin$, respectively.
Then, for any  $ t,\constseuil>0 $, we have with probability $1-e^{\text-t}$:
$$
\big|R_n^{\lambda}-R^{\lambda}\big|(\bc,\bc^*)\leq \constseuil^{\text-1}\left[R(\bc,\bc^*)+\ordreterme{\lambda}{\constseuil}{t}\right],\,\mbox{ for all }\bc\in\mathcal{C},
$$
where $\ordreterme{\lambda}{\constseuil}{t}$ satisfies the following fixed point equation:
\begin{equation}\label{eq point fixe}
\sqrt r\constseuil{\frac{\lambda^{\text-\bar\beta}}{\sqrt n}}\left[\constsupboundprocess\sqrt{\constantsupconvolution} \sqrt{2\constantmargin t}+6\left(1+\frac{1}{\sqrt n\lambda^{d/2}}\right)\constloustau+\frac{2t  \constsupboundprocess}{3\sqrt{n}\lambda^{d/2}}\right]+\frac{2t}{3n}
=r,
\end{equation}
with $ \bar\beta=\sum_{v=1}^d \beta_v $ and:
$$
\constsupboundprocess:=4\sqrt{\mathcal{V}(d)}\constantFourierPolynomial^{\text-1}\|w\|_1 ~ \text{and}~\constloustau:=\frac{8\sqrt{\mathcal{V}(d)}}{\constantFourierPolynomial}\|w\|_1\left(\frac{1}{3}\vee \sqrt{2\constantsupconvolution}\right)  \left[\log(|\cM|)+kd\big(\log (kd)+6\log(2)\big)\right],
$$
and $\mathcal{V}(d)=\pi^{d/2}/\Gamma(d/2+1)$.
\end{proposition}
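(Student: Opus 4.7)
The plan is to combine Bousquet's concentration inequality with a peeling (localization) argument, and use Fourier analysis to control the deconvolution kernel $\mathcal{K}_\lambda$ through \textbf{K1} and \textbf{NA}$(\rho,\beta)$. Fix $\bc^\ast\in\mathcal{M}$ (a union bound over the finite oracle set $\mathcal{M}$ is deferred and absorbed in the $\log|\mathcal{M}|$ term inside $\zeta_3$), introduce the centered process $\nu_n^\lambda(\bc):=(R_n^\lambda-R^\lambda)(\bc,\bc^\ast)$, and for each $r>0$ the slice $\mathcal{C}(r):=\{\bc\in\mathcal{C}:R(\bc,\bc^\ast)\leq r\}$. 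The aim is a high-probability bound on $\sup_{\mathcal{C}(r)}|\nu_n^\lambda|$ that is linear in $\sqrt r$ (plus the Bousquet tail $2t/(3n)$), which peeling turns into a uniform-in-$\bc$ statement with $r$ replaced by $R(\bc,\bc^\ast)$, and Young's inequality then turns into the self-bounded inequality of the proposition, the balance condition \eqref{eq point fixe} defining $\ordreterme{\lambda}{A}{t}$.

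To feed Bousquet, I need the envelope, the weak variance, and the expected supremum. Writing $\gamma_\lambda(\bc,\cdot)-\gamma_\lambda(\bc^\ast,\cdot)=\mathcal{K}_\lambda\ast h_\bc$ with $h_\bc=[\gamma(\bc,\cdot)-\gamma(\bc^\ast,\cdot)]\1_{\mathcal{B}(0,1)}$, the $4$-Lipschitz property of the $k$-means contrast on $\mathcal{B}(0,1)$ gives $\|h_\bc\|_2^2\leq 16\mathcal{V}(d)\|\bc-\bc^\ast\|^2$. From the Fourier identity $\mathcal{F}[\mathcal{K}_\lambda](\xi)=\mathcal{F}[\mathcal{K}](\lambda\xi)/\mathcal{F}[\eta](\xi)$, the compact support of $\mathcal{F}[\mathcal{K}]$ granted by \textbf{K1}, and \textbf{NA}$(\rho,\beta)$, one gets $\sup_\xi|\mathcal{F}[\mathcal{K}_\lambda](\xi)|\lesssim \rho^{-1}\lambda^{-\bar\beta}$. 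Plancherel together with $\|f_Z\|_\infty\leq\eta_\infty$ (since $f_Z=f\ast\eta$) then yield $\mathrm{Var}[(\mathcal{K}_\lambda\ast h_\bc)(Z)]\lesssim \rho^{-2}\eta_\infty\mathcal{V}(d)\|w\|_1\lambda^{-2\bar\beta}\|\bc-\bc^\ast\|^2$, which \textbf{MA}$(\kappa)$ upgrades to $\sigma^2\lesssim \zeta_2^2\eta_\infty\kappa\lambda^{-2\bar\beta}r$ on $\mathcal{C}(r)$. Cauchy--Schwarz in the convolution, together with $\|\mathcal{K}_\lambda\|_2\lesssim \rho^{-1}\|w\|_1^{1/2}\lambda^{-\bar\beta-d/2}$, gives the envelope $b\lesssim \zeta_2\lambda^{-\bar\beta-d/2}\sqrt{\kappa r}$; the extra $\lambda^{-d/2}$ factor in $b$ is what produces the $(\sqrt n\lambda^{d/2})^{-1}$ corrections in \eqref{eq point fixe}. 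For the expected supremum, a covering of the compact $kd$-dimensional parameter space $\mathcal{C}\subset\mathbb{R}^{dk}$ at scale $\varepsilon$ (covering number $\lesssim(3k/\varepsilon)^{kd}$), Talagrand's contraction principle to reduce to a Rademacher process, chaining against the $L^2$-variance bound, and a union bound over $\mathcal{M}$ together produce $\mathbb{E}\sup_{\mathcal{C}(r)}|\nu_n^\lambda|\lesssim \zeta_3\lambda^{-\bar\beta}n^{-1/2}\sqrt r\,(1+(\sqrt n\lambda^{d/2})^{-1})$, where the combinatorial factor $\log|\mathcal{M}|+kd(\log(kd)+6\log 2)$ inside $\zeta_3$ is exactly what chaining produces.

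Inserting these three ingredients into Bousquet's inequality yields, with probability at least $1-e^{-t}$ and for all $\bc\in\mathcal{C}(r)$,
\[
|\nu_n^\lambda(\bc)|\leq \frac{\sqrt r\,\lambda^{-\bar\beta}}{\sqrt n}\Bigl[\zeta_2\sqrt{\eta_\infty}\sqrt{2\kappa t}+6\bigl(1+(\sqrt n\lambda^{d/2})^{-1}\bigr)\zeta_3+\tfrac{2t\zeta_2}{3\sqrt n\lambda^{d/2}}\Bigr]+\frac{2t}{3n}.
\]
A geometric peeling $r_j=a^{-j}r_0$ with a union bound over the $O(\log n)$ non-trivial levels extends this to every $\bc\in\mathcal{C}$ with $r$ replaced by $R(\bc,\bc^\ast)$ (logarithmic losses are absorbed in the constants). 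Applying Young's inequality $\sqrt{R(\bc,\bc^\ast)}\,y\leq A^{-1}R(\bc,\bc^\ast)+(A/4)y^2$ to each $\sqrt r$-factor separates the self-bounded piece $A^{-1}R(\bc,\bc^\ast)$ from a remainder, whose level is precisely the solution $\ordreterme{\lambda}{A}{t}$ of the fixed-point equation \eqref{eq point fixe}. A final union bound over $\bc^\ast\in\mathcal{M}$ (already built into $\zeta_3$) completes the proof.

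The main technical obstacle is the \emph{Fourier balance}: the variance must scale only like $\lambda^{-2\bar\beta}$, not $\lambda^{-2\bar\beta-d}$, which requires genuinely using $\|f_Z\|_\infty\leq\eta_\infty$ together with Plancherel on the compactly supported $\mathcal{F}[\mathcal{K}]$, while the envelope unavoidably carries the extra $\lambda^{-d/2}$. Keeping these two scales properly separated throughout Bousquet's inequality, the chaining of the Rademacher process, and the Young inequality is what makes the fixed-point equation tractable and ultimately produces the fast-rate conclusion of Theorem \ref{thm:nonadaptiveup}. Tracking the exact constants $\zeta_2,\zeta_3$ through all these steps is the delicate but essentially mechanical part of the proof.
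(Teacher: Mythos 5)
The scaffolding you set up — the variance bound $\sigma^2\lesssim\zeta_2^2\eta_\infty\kappa\lambda^{-2\bar\beta}r$, the sup-norm envelope $b\lesssim\zeta_2\lambda^{-\bar\beta-d/2}\sqrt{\kappa r}$, and the chained complexity bound $\mathbb{E}\sup\lesssim\zeta_3\lambda^{-\bar\beta}n^{-1/2}\sqrt r\,(1+(\sqrt n\lambda^{d/2})^{-1})$ — matches the three lemmas the paper proves (Lemmas \ref{lipappli}, \ref{lem sup bound process}, \ref{complexity}), including the crucial point that the $L^2$ bound uses $\|f*\eta\|_\infty\leq\eta_\infty$ with Plancherel to stay at $\lambda^{-\bar\beta}$ while the envelope necessarily picks up the extra $\lambda^{-d/2}$.

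Where your argument goes wrong is the localization step. You apply Bousquet's inequality separately on each slice $\mathcal{C}(r_j)$ and then take a union bound over $O(\log n)$ geometric levels, claiming the logarithmic cost is ``absorbed in the constants.'' It is not. To keep the total failure probability at $e^{-t}$ after a union bound over $J\asymp\log n$ events you must run each Bousquet application at level $t_j=t+\log J$, and the $\sqrt{2t_j}$ in Bousquet's fluctuation term then produces a $\sqrt{t+\log\log n}$ factor. That $\sqrt{\log\log n}$ dependence on $n$ cannot be folded into $\zeta_2,\zeta_3$, which depend only on $d,k,\rho,\eta_\infty,|\mathcal{M}|$; it contradicts the stated fixed-point equation \eqref{eq point fixe}. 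In fact this is exactly the loss the paper is designed to eliminate: the discussion after Theorem \ref{thm:nonadaptiveup} explicitly notes that the earlier result of Loustau had a $\sqrt{\log\log n}$ term from this style of peeling.

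The paper's actual proof sidesteps the issue by applying Bousquet's inequality \emph{once}, to the already-localized weighted process
$Z_\lambda=A\sup_{\bc\in\mathcal{C}}\frac{r}{r+R(\bc,\bc^*)}\bigl|(R_n^\lambda-R^\lambda)(\bc,\bc^*)\bigr|$.
The ratio weight $r/(r+R)$ does the localization deterministically: $\sigma_\lambda$ and $T_\lambda$ are then bounded uniformly over all of $\mathcal{C}$ (using that $r\cdot R/(r+R)^2$ and $r\sqrt R/(r+R)$ are bounded), and the peeling appears only in the control of the \emph{expectation} $\mathbb{E}Z_\lambda$, where the shell decomposition is summed using linearity of expectation — no union bound, hence no $\log\log n$. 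To fix your proof you would need to replace the slice-by-slice Bousquet plus union bound with this single application to the weighted sup, keeping the peeling on the expectation side only.
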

\begin{proof}
The proof of Proposition \ref{prop deviation massart} is based on the Bousquet's version of Talagrand concentration inequality (see \cite{Bousquet02}) applied to the following weighted random variable:
\beqn
\label{weightedep}
Z_\lambda:=A\sup_{\bc\in\mathcal{C}}\frac{r}{r+R(\bc,\bc^*)}\left|\sino\gamma_\lambda(\bc,Z_i)-\gamma_\lambda(\bc^*,Z_i)-\bE\big[\gamma_\lambda(\bc,Z)-\gamma_\lambda(\bc^*,Z)\big]\right|.
\eeqn
More precisely, let us introduce the following notations. For any $ \lambda\in\Lambda $, define
\begin{equation}
\label{bousquetterms}
\sigma^2_\lambda:=A^2\sup_{\bc\in\cC}
\bE\left[\frac{r\big(\gamma_\lambda(\bc,Z)-\gamma_\lambda(\bc^*,Z)\big)}{r+R(\bc,\bc^*)}\right]^2,~\op{and}~T_\lambda :=A \sup_{\bc\in\cC}\left\|\frac{r\big[\gamma_\lambda(\bc,\cdot)-\gamma_\lambda(\bc^*,\cdot)\big]}{r+R(\bc,\bc^*)}\right\|_\infty.
\end{equation}
Then, under separability condition over $ \cC $, for any $ t>0 $, the classical Bousquet's inequality claims that:
\begin{equation*}
\bP\left(Z_\lambda\geq \bE Z_\lambda+\sqrt{\frac{2t}{n}\left(\sigma^2_\lambda+\left[1+T_\lambda\right] \bE Z_\lambda\right)}+\frac{t}{3n}\right)\leq e^{\text-t}.
\end{equation*}
In the sequel, we use a simpler version of Bousquet's inequality as follows. By simple algebra, we have:
\begin{align*}
\bE Z_\lambda+\sqrt{\frac{2t}{n}\left(\sigma^2_\lambda+\left[1+T_\lambda\right] \bE Z_\lambda\right)}+\frac{t}{3n}
&\leq\bE Z_\lambda+\sigma_\lambda\sqrt{\frac{2t}{n}}+\sqrt{\frac{2t}{n}\left(1+T_\lambda\right) \bE Z_\lambda}+\frac{t}{3n}\\
&\leq \sigma_\lambda\sqrt{\frac{2t}{n}}+\bE Z_\lambda+2\sqrt{\bE Z_\lambda}\sqrt{\frac{t}{n}\left(1+T_\lambda\right)} +\frac{t\left(1+T_\lambda\right)}{3n}\\
&\leq \sigma_\lambda\sqrt{\frac{2t}{n}}+2\bE Z_\lambda +2\frac{t\left(1+T_\lambda\right)}{3n},
\end{align*}
where we use $ (a+b)^2\leq2(a^2+b^2),\:a,b\in\bR $ to get the last inequality. We can then give a simpler version of Bousquet's inequality.
For any $ t>0 $, we have:
\begin{equation}
\label{bousquetsimplified}
\bP\left(Z_\lambda\geq 2\bE Z_\lambda+\sigma_\lambda\sqrt{\frac{2t}{n}}+ 2\frac{t\left(1+T_\lambda\right)}{3n}\right)\leq e^{\text-t}.
\end{equation}

\medskip
We are now on time to give convenient upper bounds for the terms depending on $\lambda$ in \eqref{bousquetsimplified}.

\smallskip
\noindent{\it Control of $ \sigma_\lambda $:} The control of $ \sigma_\lambda $ is based on Lemma \ref{lipappli} and the margin assumption {\bf MA}$(\constantmargin)$. We have, from \eqref{bousquetterms}:
\begin{equation}\label{eq control variance}
\sigma_\lambda^2=A^2\sup_{\bc\in\cC}\frac{r^2\:\bE\big(\gamma_{\lambda}(\bc,Z)-\gamma_{\lambda}(\bc^*,Z)\big)^2}{\left(R(\bc,\bc^*)+r\right)^2}\leq A^2 \sup_{\bc\in\cC}\frac{r^2\constsupboundprocess^2\constantsupconvolution \lambda^{\text-2\bar\beta}\constantmargin R(\bc,\bc^*)}{\left(R(\bc,\bc^*)+r\right)^2}\leq {\constseuil^2\constantmargin\constsupboundprocess^2\constantsupconvolution \lambda^{\text-2\bar\beta}r},
\end{equation}
where the last inequality is obtained considering both cases $ R(\bc,\bc^*)\leq r $ and $ R(\bc,\bc^*)>r $.
\\

\smallskip
\noindent{\it Control of $ T_\lambda $:}
Using Lemma \ref{lem sup bound process}, and the margin assumption {\bf MA}$(\constantmargin)$, one gets immediately:
\begin{equation}\label{eq control sup borne}
T_\lambda\leq \constseuil \sqrt{\constantmargin r} \constsupboundprocess\lambda^{-\bar\beta-d/2}.
\end{equation}

\smallskip
\noindent{\it Control of $ \bE Z_\lambda $:} The control of $ \bE Z_\lambda $ needs Lemma \ref{complexity} and a modified version of the so-called {\it peeling device} suggested by \cite{levrard}. Let $\mu>1$ be a real number. We then have:
$$
Z_\lambda\leq \sup_{\bc\in\cC \::\:R(\bc,\bc^*)\leq r}\frac{\constseuil r\:\big|R_n^{\lambda}-R^{\lambda}\big|(\bc,\bc^*)}{R(\bc,\bc^*)+r}+\sum_{k\geq0} A\sup_{\bc\in\cC \::\:r\mu^k\leq R(\bc,\bc^*)\leq r\mu^{k+1}}\frac{ r\:\big|R_n^{\lambda}-R^{\lambda}\big|(\bc,\bc^*)}{R(\bc,\bc^*)+r}
$$
Taking the expectation on both sides and using Lemma \ref{complexity} lead to:
\begin{align*}
\bE\bZ_\lambda&\leq\frac{\constseuil\constloustau\lambda^{\text-\bar\beta}\sqrt r }{\sqrt n}\left(1+\frac{1}{\sqrt n\lambda^{d/2}}\right)+\sum_{k\geq0}\frac{\constseuil\constloustau\lambda^{\text-\bar\beta}\sqrt r \mu^{(k+1)/2}}{\sqrt n(1+\mu^k)}\left(1+\frac{1}{\sqrt n\lambda^{d/2}}\right)\\
&\leq\frac{\constseuil\constloustau\lambda^{\text-\bar\beta} \sqrt r}{\sqrt{n}}\left(1+\frac{1}{\sqrt n\lambda^{d/2}}\right)\left(1+\sum_{k\geq0}\frac{ \mu^{(k+1)/2}}{(1+\mu^k)}\right)\leq 3\constseuil\constloustau\frac{\lambda^{\text-\bar\beta}\sqrt r}{\sqrt{n}}\left(1+\frac{1}{\sqrt n\lambda^{d/2}}\right),
\end{align*}
where the last inequality is obtained taking $ \mu=4. $
Thus, using the definition of $ r=\ordreterme{\lambda}{\constseuil}{t}$, the last inequality, \eqref{eq control variance} and \eqref{eq control sup borne}, we have 
\begin{align*}
\sigma_\lambda\sqrt{\frac{2t}{n}}+2\bE \bZ_\lambda +2\frac{t\left(1+T_\lambda\right)}{3n}
&\leq\sqrt r\constseuil{\frac{\lambda^{\text-\bar\beta}}{\sqrt n}}\left[\constsupboundprocess\sqrt{\constantsupconvolution} \sqrt{2\constantmargin t}+6\left(1+\frac{1}{\sqrt n\lambda^{d/2}}\right)\constloustau+\frac{2t  \constsupboundprocess}{3\sqrt{n}\lambda^{d/2}}\right]+\frac{2t}{3n}\\
&=r=\ordreterme{\lambda}{\constseuil}{t}.
\end{align*}
Invoking Bousquet's inequality \eqref{bousquetsimplified} and the last inequality, we obtain
$
\bP\left(Z_\lambda\geq \ordreterme{\lambda}{\constseuil}{t}\right)\leq e^{\text-t}.
$
Then, the proof is complete by definition of $ Z_\lambda $ in \eqref{weightedep}.
\end{proof}

\subsection{Proof of Theorem \ref{thm:nonadaptiveup}}
As we mentioned above, the proof is based on the application of Propositions \ref{prop bias} and \ref{prop deviation massart}. Indeed, from the bias-variance decomposition \eqref{bvdecomposition}, using Proposition \ref{prop bias} with $ \epsilon=2 $ and $ \bc=\hat{\mathbf{c}}_\lambda$, and Proposition \ref{prop deviation massart} with $ A=4 $ and $ \bc=\hat{\mathbf{c}}_\lambda$, we have with probability $ 1-e^{-t} $
$$
R(\hat\bc_{\lambda},\bc^{*})\leq2\constbias\constantmargin \lambda^{2s}+\frac{1}{2}R(\hat\bc_{\lambda},\bc^{*})+\frac{1}{4}r^*_{\lambda,n}(4,t)
$$
Moreover, note that for any $\lambda>0$ such that $ n\lambda^d\to \infty $ as $ n\rightarrow\infty $, and for any $ t>0$ and any $\constseuil\geq 1 $, there exists $ n_0\in\bN $ such that for all $n\geq n_0$ it holds:
\begin{equation}\label{eq sup point fixe}
\ordreterme{\lambda}{\constseuil}{t}\leq 2A^2\frac{\lambda^{\text-2\bar\beta}}{ n}\left[\constsupboundprocess\sqrt{\constantsupconvolution} \sqrt{2\constantmargin t}+6\left(1+\frac{1}{\sqrt n\lambda^{d/2}}\right)\constloustau+\frac{2t  \constsupboundprocess}{3\sqrt{n}\lambda^{d/2}}\right]^2,
\end{equation}
looking at the solution of the fixed point equation \eqref{eq point fixe}. Using two last inequalities, we have for $n$ such that $n\lambda^d\geq 1$,  with probability $ 1-e^{-t}$
\begin{equation}\label{eq sup bound excess risk}
R(\hat\bc_{\lambda},\bc^{*})\leq4\constbias\constantmargin \lambda^{2s}+64\frac{\lambda^{\text-2\bar\beta}}{ n}\left[\constsupboundprocess^2\constantsupconvolution 2\constantmargin t+72\constloustau^2+\frac{4t^2  \constsupboundprocess^2}{9n\lambda^{d}}\right].
\end{equation}
By integration, the choice of $\lambda$ in Theorem \ref{thm:nonadaptiveup} gives:
$$
\bE R(\hat\bc_{\lambda},\bc^{*})\leq \max\left(4\constbias\constantmargin, 48\left(2\constsupboundprocess^2\constantsupconvolution\constantmargin+12\constloustau^2\right)\right)n^{-s/(s+\bar\beta)}.
$$
\epr

\subsection{Proof of Theorem \ref{th adaptivity lambda}} In the sequel, we write $\lambda^*$ the element of $\Lambda$ solution of the equation $ 12\constbias\kappa \lambda^{2s}=\threshold\lambda $, where $ \constbias $ is given in Proposition \ref{prop bias}. Consequently, note that there exists a constant $ \Diamond>0 $ such that 
$$
\lambda^*=\Diamond n^{-\frac{1}{2s+2\bar\beta}}.
$$
Moreover, we introduce $ \lambda^*_a $, the element of $ \Lambda_a $ such that $ \lambda^*_a\leq\lambda^*\leq a^{\text-1}\lambda^*_a $.
Let us consider the event $ \Omega=\big\{\lambda^*_a\leq\hat\lambda\big\} $. Firstly, by construction of $ \hat\lambda $, we have on $\Omega$:
\begin{equation}\label{eq construction}
R_n^{\lambda^*_a}(\hat\bc_{\hat \lambda},\hat\bc_{\lambda^*_a})=R_n^{\lambda^*_a}(\hat\bc_{\hat \lambda})-R_n^{\lambda^*_a}(\hat\bc_{\lambda^*_a})\leq 3\threshold{\lambda^*_a}.
\end{equation}
By simple computations, one gets:
\beqnn
 R(\hat\bc_{\hat \lambda},\bc^*)&=&(R-R^{\lambda^*_a})(\hat\bc_{\hat \lambda},\bc^{*})+(R^{\lambda^*_a}-R_n^{\lambda^*_a})(\hat\bc_{\hat \lambda},\bc^{*})+R_n^{\lambda^*_a}(\hat\bc_{\hat \lambda},\hat\bc_{\lambda^*_a})+R_n^{\lambda^*_a}(\hat\bc_{\lambda^*_a},\bc^{*})\\
&\leq&(R-R^{\lambda^*_a})(\hat\bc_{\hat \lambda},\bc^{*})+(R^{\lambda^*_a}-R_n^{\lambda^*_a})(\hat\bc_{\hat \lambda},\bc^{*})+R_n^{\lambda^*_a}(\hat\bc_{\hat \lambda},\hat\bc_{\lambda^*_a}).
\eeqnn
Then, using Proposition \ref{prop bias} with some $ \epsilon>1/2$ (chosen later on) and $ \bc=\hat\bc_{\hat \lambda} $, it yields on $\Omega$ using \eqref{eq construction}:
\begin{eqnarray*}
R(\hat\bc_{\hat \lambda},\bc^*)\leq
\epsilon\constbias\constantmargin (\lambda^*_a)^{2s}+\frac{1}{2\epsilon}R(\hat\bc_{\hat \lambda},\bc^*)+(R^{\lambda^*_a}-R_n^{\lambda^*_a})(\hat\bc_{\hat \lambda},\bc^{*})+3\threshold{\lambda^*_a}.
\end{eqnarray*}
Using
Proposition \ref{prop deviation massart} with $t=2\log(n) $, and $ \bc=\hat\bc_{\hat \lambda}$, it holds for any $A>2\epsilon/(2\epsilon-1)$, with probability at least $1-n^{\text-2}-\P(\lambda^*_a\geq\hat{\lambda})$:
$$
R(\hat\bc_{\hat \lambda},\bc^*)\leq
\frac{2\epsilon A}{2\epsilon A-(2\epsilon+A)}\left(\epsilon\constbias\constantmargin (\lambda^*_a)^{2s}+\frac{1}{A}\ordreterme{\lambda^*_a}{A}{2\log(n)}+3\threshold{\lambda^*_a}\right).
$$
Choosing $\epsilon=2$ and $A=\sqrt{2}$, we obtain:
$$
R(\hat\bc_{\hat \lambda},\bc^*)\leq
2\left(2\constbias\constantmargin (\lambda^*_a)^{2s}+\frac{1}{\sqrt{2}}\ordreterme{\lambda^*_a}{\sqrt{2}}{2\log(n)}+3\threshold{\lambda^*_a}\right).
$$
From the definition of $ \delta_\lambda $ and \eqref{eq sup point fixe}, there exists $ n_1\in\bN $ such that $ \threshold{\lambda}\geq \ordreterme{\lambda}{\sqrt{2}}{2\log(n)} $ for any $ \lambda\in\Lambda$ and $ n\geq n_1 $. We then obtain:
\begin{eqnarray}\label{eq step1 lepski}
R(\hat\bc_{\hat \lambda},\bc^*)\leq
4\constbias\constantmargin (\lambda^*_a)^{2s}+8\threshold{\lambda^*_a},
\end{eqnarray}
\\

It remains to control the probability $ \bP(\hat\lambda\leq \lambda^*_a) $. Note that, using the definition of $ \hat\lambda $, we have by union bound:
\begin{align}\label{eq decomposition cas 2}
\P(\lambda^*_a\geq\hat{\lambda})\leq\sum_{\lambda\leq\lambda^*_a}\P\left(R_n^{\lambda}(\estimator{\lambda^*_a},\hat\bc_{\hat \lambda})>3\threshold{\lambda}\right).
\end{align}
From simple computations and using twice Proposition \ref{prop bias} with $ \epsilon>0 $ and $ \bc=\hat{\mathbf{c}}_\lambda$ or $\bc=\hat\bc_{\lambda^*_a}$, for any $ \lambda\leq \lambda^*_a $, we have:
\begin{align*}
R_n^{\lambda}(\hat\bc_{\lambda^*_a},\hat\bc_{\hat \lambda})&\leq(R_n^{\lambda}-R^\lambda)(\hat\bc_{\lambda^*_a},\hat\bc_{\hat \lambda})+(R^{\lambda}-R)\big(\hat\bc_{\lambda^*_a},\hat\bc_{\lambda}\big)+R(\hat\bc_{\lambda^*_a},\hat\bc_{\lambda})\\
&\leq(R_n^{\lambda}-R^\lambda)(\hat\bc_{\lambda^*_a},\hat\bc_{\lambda})+2\constbias \constantmargin \epsilon\lambda^{2s}+\frac{1}{2\epsilon}R(\hat\bc_{\lambda^*_a},{\bc}^*)+\frac{1}{2\epsilon}R(\hat\bc_{\lambda},{\bc}^*)+R(\hat\bc_{\lambda^*_a},\hat\bc_{\lambda})\\
&\leq(R_n^{\lambda}-R^\lambda)(\hat\bc_{\lambda^*_a},\hat\bc_{\lambda })+2\constbias\constantmargin \epsilon\lambda^{2s}+\left(1+\frac{1}{2\epsilon}\right)R(\hat\bc_{\lambda^*_a},{\bc}^*)+\left(\frac{1}{2\epsilon}-1\right)R(\hat\bc_{\lambda^*_a},{\bc}^*).
\end{align*}
Note that $ \threshold{\lambda}\geq\sqrt{2}\ordreterme{\lambda}{\sqrt{2}}{2\log(n)} $ for any $ \lambda\in\Lambda $ and $ n\geq n_1 $. Then using twice Proposition \ref{prop deviation massart} with $ A=\sqrt{2} $ and $ \bc=\estimator{\lambda^*_a}$ or $ \bc=\estimator{\lambda}$, we get with probability $ 1-2n^{\text-2} $
$$
R_n^{\lambda}(\hat\bc_{\lambda^*_a},\hat\bc_{\lambda })\leq\threshold\lambda+2\constbias\constantmargin \epsilon\lambda^{2s}+\left(1+\frac{1}{2\epsilon}+\frac{1}{\sqrt{2}}\right)R(\hat\bc_{\lambda^*_a},{\bc}^*)+\left(\frac{1}{2\epsilon}-1+\frac{1}{\sqrt{2}}\right)R(\hat\bc_{\lambda},{\bc}^*).
$$
We are now on time to choose $\epsilon=1/(2-\sqrt{2})$ to get with probability $ 1-2n^{\text-2} $
$$
R_n^{\lambda}(\hat\bc_{\lambda^*_a},\hat\bc_{\lambda })\leq\threshold\lambda+4\constbias\constantmargin \lambda^{2s}+2R(\hat\bc_{\lambda^*_a},{\bc}^*).
$$ 
Finally, using the inequality \eqref{eq sup bound excess risk} with $ \lambda\leq\lambda^*_a $ and $t=2\log(n) $, we get with probability $ 1-3n^{\text-2} $
$$
R_n^{\lambda}(\hat\bc_{\lambda^*_a},\hat\bc_{\lambda })\leq4\constbias\constantmargin \lambda^{2s}+\threshold{\lambda}+8\constbias\constantmargin (\lambda^*_a)^{2s}+\threshold{\lambda^*_a}.
$$
By definition of $\lambda^*_a$ and $ \lambda^* $, it holds that $ \lambda\leq{\lambda^*_a}\leq \lambda^* $ and $ 12\constbias\constantmargin (\lambda^*_a)^{2s}\leq 12\constbias\constantmargin (\lambda^*)^{2s}=\threshold{\lambda^*}\leq\threshold{\lambda^*_a}\leq \threshold{\lambda}$, we thus get with probability $ 1-3n^{\text-2} $
$$
R_n^{\lambda}(\hat\bc_{\lambda^*_a},\hat\bc_{\lambda})\leq3\threshold\lambda.
$$
Using \eqref{eq decomposition cas 2} and the last inequality, we finally get:
$$
\P(\lambda^*_a\geq\hat{\lambda})\leq|\Lambda_a|/n^2\leq 3/n.
$$
Thus, using \eqref{eq step1 lepski}, the last inequality, and definitions of $ \lambda^*_a $ and $ \lambda^* $, there exists a universal constant $ \constadaptative $ depending on $ \constantmargin,w, L, d,s, \beta, \constantFourierPolynomial, k, \constantsupconvolution,$ and $ |\cM|  $ such that with probability greater than $1-4n^{\text-1}$:
$$
R(\hat\bc_{\hat \lambda},\bc^*)\leq
4\constbias\constantmargin (\lambda^*_a)^{2s}+7\threshold{\lambda^*_a}\leq4\constbias \constantmargin (\lambda^*)^{2s}+7a^{\text-2\bar\beta}\threshold{\lambda^*}\leq\constadaptative \left(\frac{\log(n)}{n}\right)^{s/(s+\bar\beta)}.
$$
This last assertion allows to complete the proof taking $ n\geq n_1 $ sufficiently large such that $ 4n^{\text-1} $ is negligible in comparison to $\constadaptative \left(\frac{\log(n)}{n}\right)^{s/(s+\bar\beta)}$.
\epr

\section{Appendix}\label{sec:appendix}

In the proofs of Proposition \ref{prop bias} and Proposition \ref{prop deviation massart}, we use the following technical results.
              
\begin{lemma}
\label{lipappli}
Assume that {\bf NA}$(\constantFourierPolynomial,\beta)$ is satisfied for some $ \beta\in(\frac{1}{2},\infty)^d $ and $\constantFourierPolynomial>0$; and assume $\constantsupconvolution:=\no \eta\no_\infty<\infty $. Then, for any $\bc,\bc'\in\cC^2$, it holds
$$
 \left(\expectationZ\big[\gamma_\lambda(\bc,Z)
 -\gamma_\lambda(\bc',Z)\big]^2\right)^{1/2}
 \leq\constsupboundprocess\sqrt{\constantsupconvolution}\lambda^{-\bar\beta}\nno \bc-\bc'\nno,
$$
where $\constsupboundprocess=4\sqrt{\mathcal{V}(d)}\constantFourierPolynomial^{\text-1}\|w\|_1$  and $ \bar\beta=\sum_{v=1}^d \beta_v $.
\end{lemma}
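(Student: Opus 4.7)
}

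The strategy is to rewrite the difference of random losses as a convolution and then apply Plancherel's theorem to push the whole estimate into the frequency domain, where the noise assumption \textbf{NA}$(\constantFourierPolynomial,\beta)$ and the bandlimited assumption \textbf{(K1)} can be combined. First I would set $g(x) := \bigl(\gamma(\bc,x)-\gamma(\bc',x)\bigr)\1_{\cB(0,1)}(x)$; by definition of $\gamma_\lambda$ we then have
$$
\gamma_\lambda(\bc,Z)-\gamma_\lambda(\bc',Z) = (\cK_\lambda \ast g)(Z).
$$
Since the law of $Z$ has density $f\ast\eta$ with $\|f\ast\eta\|_\infty \leq \|\eta\|_\infty = \constantsupconvolution$, the second moment is controlled by the Lebesgue $L^2$ norm:
$$
\bE\bigl[(\cK_\lambda\ast g)(Z)\bigr]^2 \;\leq\; \constantsupconvolution\, \|\cK_\lambda\ast g\|_{L^2(\bR^d)}^2.
$$

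Next, I would apply Plancherel in the sharpest form for this problem, namely $\|\cK_\lambda\ast g\|_2^2 \leq \|\cF[\cK_\lambda]\|_\infty^2 \,\|g\|_2^2$. This is the crucial choice: bounding $|\cF[g]|$ pointwise by $\|g\|_1$ and integrating $|\cF[\cK_\lambda]|^2$ would cost an extra $\lambda^{-d}$ factor and give the wrong rate. From the definition \eqref{def deconvolution kernel} one computes
$\cF[\cK_\lambda](s) = \cF[\cK](\lambda s)/\cF[\eta](s)$. Assumption \textbf{(K1)} forces the support of $s\mapsto\cF[\cK](\lambda s)$ into $\{|s_v|\leq S_v/\lambda\}$ with sup bound $K_1$, while \textbf{NA}$(\constantFourierPolynomial,\beta)$ gives $|\cF[\eta](s)|^{-1} \leq \constantFourierPolynomial^{-1}\prod_v ((s_v^2+1)/2)^{\beta_v/2}$. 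On the support we have $(s_v^2+1)/2 \leq (S_v^2/\lambda^2 + 1)/2 \lesssim \lambda^{-2}$, so the product yields $\|\cF[\cK_\lambda]\|_\infty^2 \lesssim \constantFourierPolynomial^{-2}\lambda^{-2\bar\beta}$, producing exactly the target $\lambda^{-\bar\beta}$ scaling.

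It then remains to bound $\|g\|_2$. The loss $\gamma(\bc,x)=\min_j\|x-c_j\|^2$ is, on $\cB(0,1)\times\cC$, Lipschitz in $\bc$: for the index $j^*$ attaining the minimum one has $|\nabla_\bc \gamma(\bc,x)|\leq 2\|x-c_{j^*}\| \leq 4$, hence $|g(x)|\leq 4\,\nno\bc-\bc'\nno$ uniformly on $\cB(0,1)$. Integrating the square over $\cB(0,1)$ (volume $\cV(d)$) gives $\|g\|_2^2 \leq 16\,\cV(d)\,\nno\bc-\bc'\nno^2$. Combining everything yields
$$
\bE\bigl[\gamma_\lambda(\bc,Z)-\gamma_\lambda(\bc',Z)\bigr]^2 \;\leq\; 16\,\constantsupconvolution\,\cV(d)\,\|\cF[\cK_\lambda]\|_\infty^2 \,\nno\bc-\bc'\nno^2,
$$
which gives the announced bound with $\constsupboundprocess$ of the required form.

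The main obstacle I anticipate is the \emph{choice of Plancherel splitting}: the natural $\|g\|_1$-bound on $\cF[g]$ would cost an extra $\lambda^{-d/2}$ and miss the sharp rate, so one really needs the $L^\infty$ bound on $\cF[\cK_\lambda]$ paired with the $L^2$-Parseval identity for $g$. A secondary subtlety is making the inequality $(S_v^2/\lambda^2 + 1)/2 \leq C \lambda^{-2}$ uniform in the allowed range of $\lambda$, which requires restricting attention to $\lambda$ small enough (this is implicit in the fact that $\lambda$ tends to zero in the applications). The condition $\beta_v>1/2$ stated in the lemma is not used here for the inequality itself but is inherited from the context where the bound is invoked.
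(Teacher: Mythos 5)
Your proof is correct and takes essentially the same route as the paper: rewrite the loss difference as a convolution $\cK_\lambda\ast g$, bound the second moment by $\constantsupconvolution\,\|\cK_\lambda\ast g\|_2^2$, apply Plancherel with the sup-norm bound on $\cF[\cK_\lambda]$ (your remark that the alternative $L^1/L^2$ split would cost an extra $\lambda^{-d/2}$ is exactly right and is what distinguishes this lemma from Lemma~\ref{lem sup bound process}), and finish with the pointwise Lipschitz bound $|g(x)|\le 4\nno\bc-\bc'\nno$ on $\cB(0,1)$. The only cosmetic divergence is that the paper bounds $|\cF[\cK](\lambda t)|$ by $\|w\|_1$ via Riemann--Lebesgue (with $w$ denoting the kernel), which yields the stated constant $\constsupboundprocess$, whereas you invoke $K_1$ from (K1); both are legitimate and the argument is otherwise identical.
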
   
\begin{proof}

For any $x\in\R^d$, let $m(x)=[\gamma(\bc,x)-\gamma(\bc',x)] \ind_{\mathcal{B}(0,1)}(x) $. Using the Plancherel theorem and the convolution, it yields:
\begin{align*}
\expectationZ\big(\gamma_\lambda(\bc,Z)-\gamma_\lambda(\bc',Z)\big)^2&=\expectationZ\big([\cK_{\lambda}\ast m](Z)\big)^2\\
&\leq \constantsupconvolution\int_{\R^d} \big([\cK_{\lambda}\ast m](z)\big)^2dz\\
&=\constantsupconvolution\int_{\R^d} \big|\cF[\cK_{\lambda}](t)\big|^2\big|\cF[m](t)\big|^2dt.
\end{align*}
Let us bound the Fourier transform of $ \noyaudeconvolution{\lambda}{\cdot} $; by definition of $ w $, using {\bf NA}$(\constantFourierPolynomial,\beta)$ and Riemann-Lebesgue Theorem, we have:
\begin{align}\label{eq bound fourier kernel}
 \big|\cF[\cK_{\lambda}](t)\big|^2=\left|\frac{\cF[w](\lambda t)}{\cF[\eta](t)}\right|^2
&\leq\sup_{t\in[\text-\lambda^{\text-1},\lambda^{\text-1}]}\frac{\|w\|_1^2}{\left|\cF[\eta](t)\right|^2}\nonumber\\
&\leq \constantFourierPolynomial^{-2}\|w\|_1^2\sup_{t\in[\text-\lambda^{\text-1},\lambda^{\text-1}]^d} \left(\frac{t^2+1}{2}\right)^{\beta}\leq \constantFourierPolynomial^{-2}\|w\|_1^2 \lambda^{\text-2\bar\beta}.
\end{align}
Two last inequalities then imply:
$$
\expectationZ\big(\gamma_\lambda(\bc,Z)-\gamma_\lambda(\bc',Z)\big)^2\leq \constantsupconvolution\constantFourierPolynomial^{-2}\|w\|_1^2 \lambda^{\text-2\bar\beta}\int \big|\cF[m](t)\big|^2dt= \constantsupconvolution\constantFourierPolynomial^{-2}\|w\|_1^2 \lambda^{\text-2\bar\beta}\int |m(x)|^2dx.
$$
By definition of $m$, for any $ \bc,\bc'\in\cC^2 $, we get
\begin{equation}\label{eq bound m}
 |m(x)|\leq\max_{j=1,\dots,k}\big|\no x-c_j\no^2-\no x-c_j'\no^2\big|\leq\max_{j=1,\dots,k}\no c_j-c'_j\no\:\big(\no x-c_j\no+\no x-c_j'\no\big)\leq 4 \nno\bc-\bc'\nno.
\end{equation}
Therefore, \eqref{eq bound m} and \eqref{eq bound fourier kernel} yield:
$$
\expectationZ\big(\gamma_\lambda(\bc,Z)-\gamma_\lambda(\bc',Z)\big)^2\leq 16\mathcal{V}(d)\constantsupconvolution\constantFourierPolynomial^{-2}\|w\|_1^2 \lambda^{\text-2\bar\beta} \nno\bc-\bc'\nno^2,
$$
where $\mathcal{V}(d)=\frac{\pi^{d/2}}{\Gamma(d/2+1)}$ is the Lebesgue measure on $\R^d$ of $\mathcal{B}(0,1)$.
\end{proof}
         
\begin{lemma}
\label{lem sup bound process}
Assume that {\bf NA}$(\constantFourierPolynomial,\beta)$ is satisfied for some $ \beta\in(\frac{1}{2},\infty)^d $ and $\constantFourierPolynomial>0$. Then, for any $\bc,\bc'\in\cC^2$, it holds
$$
\|\gamma_\lambda(\bc,\cdot)-\gamma_\lambda(\bc',\cdot)\|_\infty
\leq\constsupboundprocess\lambda^{-\bar\beta-d/2}\nno \bc-\bc'\nno,
$$
where $\constsupboundprocess=4\sqrt{\mathcal{V}(d)}\constantFourierPolynomial^{\text-1}\|w\|_1$ and $ \bar\beta=\sum_{v=1}^d \beta_v $.
\end{lemma}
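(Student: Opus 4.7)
The plan is to mimic the Fourier-analytic argument of Lemma \ref{lipappli}, but replace the $L^2(P_Z)$ estimate by an $L^\infty$ estimate, which produces the extra $\lambda^{-d/2}$ factor. As in the previous lemma, introduce $m(x):=[\gamma(\bc,x)-\gamma(\bc',x)]\ind_{\cB(0,1)}(x)$, so that for any $z\in\bR^d$
\[
\gamma_\lambda(\bc,z)-\gamma_\lambda(\bc',z)=[\cK_\lambda\ast m](z).
\]
The idea is to bound this convolution in sup-norm by Young's inequality in the form $\|f\ast g\|_\infty\leq \|f\|_2\|g\|_2$, applied with $f=\cK_\lambda$ and $g=m$.

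For the factor $\|m\|_2$, I would reuse the elementary bound from the previous lemma, $|m(x)|\leq 4\nno\bc-\bc'\nno$ on $\cB(0,1)$ (see \eqref{eq bound m}), which together with the definition $\cV(d)=\pi^{d/2}/\Gamma(d/2+1)$ of the Lebesgue measure of $\cB(0,1)$ gives
\[
\|m\|_2\leq 4\sqrt{\cV(d)}\,\nno\bc-\bc'\nno.
\]

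For the factor $\|\cK_\lambda\|_2$, I would apply Plancherel and then reuse the pointwise bound on $\cF[\cK_\lambda]$ already established in \eqref{eq bound fourier kernel}. The key observation, which was not used in Lemma \ref{lipappli} but is essential here, is that by Assumption \textbf{(K1)} together with the scaling in the definition \eqref{def deconvolution kernel} of $\cK_\lambda$, the Fourier transform $\cF[\cK_\lambda]$ is compactly supported, namely on a set of volume of order $\lambda^{-d}$. Combining this support control with the pointwise upper bound $|\cF[\cK_\lambda](t)|^2\leq \constantFourierPolynomial^{-2}\|w\|_1^2\lambda^{-2\bar\beta}$ from \eqref{eq bound fourier kernel} yields $\|\cF[\cK_\lambda]\|_2^2\lesssim \constantFourierPolynomial^{-2}\|w\|_1^2\lambda^{-2\bar\beta-d}$, hence $\|\cK_\lambda\|_2\lesssim \constantFourierPolynomial^{-1}\|w\|_1\lambda^{-\bar\beta-d/2}$ by Plancherel. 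Multiplying the two bounds reconstitutes the announced constant $\constsupboundprocess=4\sqrt{\cV(d)}\constantFourierPolynomial^{-1}\|w\|_1$.

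The argument is essentially routine once Lemma \ref{lipappli} is in hand; the only subtlety is bookkeeping the compact support of $\cF[\cK_\lambda]$ and the scaling constants coming from \eqref{def deconvolution kernel}, which is what produces the additional $\lambda^{-d/2}$ compared with the $L^2(P_Z)$ estimate. I do not anticipate a real obstacle, merely a careful check that the normalisation conventions for the Fourier transform used in the proof of Lemma \ref{lipappli} are applied consistently so that Plancherel introduces no hidden $(2\pi)^d$ factors that would spoil the stated constant.
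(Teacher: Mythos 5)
Your proof is correct and is essentially the paper's own argument: the paper also writes $\gamma_\lambda(\bc,\cdot)-\gamma_\lambda(\bc',\cdot)=\cK_\lambda\ast m$, applies Cauchy--Schwarz to the convolution integral (which is exactly your invocation of Young's inequality $\|f\ast g\|_\infty\leq\|f\|_2\|g\|_2$), bounds $\|m\|_2\leq 4\sqrt{\cV(d)}\nno\bc-\bc'\nno$ from \eqref{eq bound m}, and evaluates $\|\cK_\lambda\|_2^2=\int|\cF[\cK_\lambda]|^2$ by Plancherel using the compact support of $\cF[\cK]$ under \textbf{(K1)} together with \textbf{NA}$(\rho,\beta)$, giving $\rho^{-2}\|w\|_1^2\lambda^{-2\bar\beta-d}$.
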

\begin{proof}
As in the proof of Lemma \ref{lipappli}, let $ m(\cdot) $ denote the function $ [\gamma(\bc,\cdot)-\gamma(\bc',\cdot)] \ind_{\mathcal{B}(0,1)}(\cdot) $. Using Cauchy-Schwarz inequality, it yields 
$$
\|\gamma_\lambda(\bc,\cdot)-\gamma_\lambda(\bc',\cdot)\|_\infty= \sup_{z\in\bR^d}\left|\int \noyaudeconvolution{\lambda}{z-x} m(x)dx\right|\leq \sup_{z\in\bR^d}\sqrt{\int \big[\noyaudeconvolution{\lambda}{z-x}\big]^2dx\int m^2(x)dx}.
$$
From \eqref{eq bound m}, we have that $\int m^2(x)dx\leq 16\mathcal{V}(d)\no\bc-\bc'\no^2$. Moreover, using the definition of $ \noyaudeconvolution{\lambda}{\cdot}$ and some change of variables, we get with {\bf NA}$(\constantFourierPolynomial,\beta)$:
$$
\sup_{z\in\bR^d}\int\big[\noyaudeconvolution{\lambda}{z-x}\big]^2dx=\int \big|\cF[\cK_{\lambda}](t)\big|^2dt=\int\left|\frac{\cF[w](\lambda t)}{\cF[\eta](t)}\right|^2dt\leq\constantFourierPolynomial^{-2}\|w\|_1^2\lambda^{-2\bar\beta-d}.
$$
Three last inequalities then imply 
$$
\|\gamma_\lambda(\bc,\cdot)-\gamma_\lambda(\bc',\cdot)\|_\infty\leq 4\sqrt{\mathcal{V}(d)}\constantFourierPolynomial^{-1}\|w\|_1\lambda^{-\bar\beta-d/2}\nno\bc-\bc'\nno.
$$
\end{proof}
With this result, we also need to control the complexity involved in Section \ref{sec:nonadaptive} thanks to the following lemma:
\begin{lemma}
\label{complexity}
Assume that {\bf NA}$(\constantFourierPolynomial,\beta)$ are satisfied for some $ \beta\in(\frac{1}{2},\infty)^d $ and $\constantFourierPolynomial>0$. Then, $\forall \lambda,\delta>0$, we have: 
$$
\mathbb{E} \sup_{(\mathbf{c},\mathbf{c}^*)\in\mathcal{C}\times\mathcal{M}, \|\mathbf{c}-\mathbf{c}^*\|^2 \leq \delta} {\big|R_n^{\lambda}-R^{\lambda}\big|(\bc,\bc^*)} \leq \constloustau\frac{\sqrt\delta}{\sqrt n \lambda^{\bar\beta}}\left(1+\frac{1}{\sqrt n\lambda^{d/2}}\right),
$$
where 
$$
\constloustau:=\frac{8\sqrt{\mathcal{V}(d)}}{\constantFourierPolynomial}\|w\|_1\left(\frac{1}{3}\vee \sqrt{2\constantsupconvolution}\right)  \left[\log(|\cM|)+kd\big(\log (kd)+6\log(2)\big)\right].
$$
\end{lemma}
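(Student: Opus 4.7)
The plan is to control
\[
\bE\sup_{(\bc,\bc^*)\in\cC\times\cM,\;\nno\bc-\bc^*\nno^2\leq\delta}\big|R_n^\lambda-R^\lambda\big|(\bc,\bc^*)
\]
by combining Gin\'e--Zinn symmetrization, a Talagrand--Massart moment inequality for empirical processes, a Lipschitz covering of the Euclidean ball $\{\bc\in\bR^{dk}:\nno\bc-\bc^*\nno\leq\sqrt\delta\}$, and a union bound over the finite oracle set $\cM$. Lemmas \ref{lipappli} and \ref{lem sup bound process} already supply the two key size parameters of the indexing function class $\cF_\delta:=\{\gamma_\lambda(\bc,\cdot)-\gamma_\lambda(\bc^*,\cdot):\bc^*\in\cM,\;\nno\bc-\bc^*\nno\leq\sqrt\delta\}$: an $L^2(\probaZ)$-diameter $\sigma\leq\constsupboundprocess\sqrt{\constantsupconvolution}\lambda^{\text-\bar\beta}\sqrt\delta$ and a sup-norm envelope $U\leq\constsupboundprocess\lambda^{\text-\bar\beta-d/2}\sqrt\delta$.

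After symmetrization (at the cost of a factor $2$), I would apply a moment inequality of Talagrand--Massart type, of the form
\[
\bE\sup_{f\in\cF_\delta}\Big|\tfrac{1}{n}\sum_{i=1}^n f(Z_i)-\bE f(Z)\Big|\;\leq\; c\Big[\sigma\sqrt{V/n}+UV/n\Big],
\]
where $V$ denotes a uniform entropy quantity of the class. Lemma \ref{lipappli} shows that $\bc\mapsto\gamma_\lambda(\bc,\cdot)-\gamma_\lambda(\bc^*,\cdot)$ is $L^2(\probaZ)$-Lipschitz with constant $\constsupboundprocess\sqrt{\constantsupconvolution}\lambda^{\text-\bar\beta}$, so the standard volumetric bound $N(\e,\{\bc:\nno\bc-\bc^*\nno\leq\sqrt\delta\},\nno\cdot\nno)\leq(3\sqrt\delta/\e)^{kd}$ combined with the union bound over the $|\cM|$ choices of $\bc^*$ produces
\[
\log N\big(\e,\cF_\delta,L^2(\probaZ)\big)\;\leq\;\log|\cM|+kd\log\!\big(3\constsupboundprocess\sqrt{\constantsupconvolution}\lambda^{\text-\bar\beta}\sqrt\delta/\e\big).
\]
Choosing a single covering scale of order $\e\asymp U/n$ (equivalently stopping chaining at that level) converts this into $V=\log|\cM|+kd(\log(kd)+6\log 2)$, which is exactly the entropy factor appearing inside $\constloustau$.

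Plugging $\sigma$, $U$ and $V$ back into the maximal inequality, the $L^2$ part yields the leading contribution proportional to $\sqrt\delta/(\sqrt n\lambda^{\bar\beta})$ and the sup-norm/Bernstein part yields the correction proportional to $\sqrt\delta/(n\lambda^{\bar\beta+d/2})$; their sum reproduces the announced bound $\constloustau\sqrt\delta/(\sqrt n\lambda^{\bar\beta})(1+1/(\sqrt n\lambda^{d/2}))$. The principal obstacle lies in the bookkeeping of numerical constants; in particular, the factor $(\tfrac{1}{3}\vee\sqrt{2\constantsupconvolution})$ in $\constloustau$ encodes the two competing regimes in which either the $\tfrac{1}{3}$-constant of the Bennett--Bernstein correction or the $\sqrt{2\constantsupconvolution}$-factor coming from Parseval (used in the proof of Lemma \ref{lipappli}) is dominant. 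Beyond this, the argument closely parallels the use of Bousquet's inequality already exploited in Proposition \ref{prop deviation massart} and is essentially routine.
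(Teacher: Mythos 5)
Your high-level plan — use the $L^2$-Lipschitz bound of Lemma~\ref{lipappli} and the sup-norm Lipschitz bound of Lemma~\ref{lem sup bound process} as the ``$\sigma$'' and ``$U$'' parameters, cover the Euclidean ball $\{\bc:\nno\bc-\bc^*\nno\leq\sqrt\delta\}$, take a union bound over the finite set $\cM$, and plug into a two-parameter maximal inequality — is exactly the structure of the paper's proof. Two points, however, do not go through as stated.

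First, and most importantly, you claim that choosing a \emph{single} covering scale $\e\asymp U/n$ (``equivalently stopping chaining at that level'') converts the covering-number estimate into $V=\log|\cM|+kd(\log(kd)+6\log 2)$. It does not. If you cover $\{\bc:\nno\bc-\bc^*\nno\leq\sqrt\delta\}$ by a single net whose sup-norm oscillation is of order $U/n$, the volumetric bound gives $\log N\lesssim kd\log(3n)$, and this extra $\log n$ factor would pollute both the $\sigma\sqrt{V/n}$ and the $UV/n$ terms — the resulting bound would be strictly weaker than the one claimed in Lemma~\ref{complexity}. The constant $\zeta_4:=kd(\log(kd)+6\log 2)$ is \emph{not} the log-cardinality of any single net; it is the value of the geometric chaining series $\sum_{v\geq 1}a^{v/2}\log(|\Gamma_v||\Gamma_{v-1}|)$ with $|\Gamma_v|=(\sqrt{kd}/a^v)^{kd}$ and the specific choice $a=1/4$. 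The paper therefore performs \emph{full} chaining with an infinite sequence of $\delta a^v$-nets $\Gamma_v$, applies the finite-class maximal inequality (Massart, 2007, Lemma 6.6) at each level, and sums the geometric series; the absence of any spurious $\log n$ in the lemma is precisely what this buys you and what a single-scale covering cannot deliver. The oracle set $\cM$ is handled separately as the ``$A_1$'' term (a base point $\bc^0$ for each $\bc^*\in\cM$) before the chaining starts.

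Second, a minor point: the Gin\'e--Zinn symmetrization step is unnecessary and would cost you an extra factor of $2$. The maximal inequality the paper invokes already bounds $\bE\sup_\phi|\frac1n\sum_i\phi(\cX_i)-\bE\phi(\cX_i)|$ directly for finite families, so one applies it to the centered process without first passing to a Rademacher process. Your reading of the factor $(\frac13\vee\sqrt{2\constantsupconvolution})$ is essentially right as regards the $\frac13$ coming from the Bennett/Bernstein correction $2b\log|\Phi|/(3n)$; note though that the $\sqrt{2}$ in $\sqrt{2\constantsupconvolution}$ comes from the $\sqrt{2\log|\Phi|}$ in the maximal inequality, while only the $\sqrt{\constantsupconvolution}$ originates from the Plancherel step inside Lemma~\ref{lipappli}.
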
 
\begin{proof} The proof is based on the chaining argument, gathering with a maximal inequality. More precisely, we use a special case of a maximal
inequality derived in \citep[Lemma 6.6, Section 6.1]{Massart07}. Adapted to our
needs, this maximal inequality reads as follows: 
\begin{lemma}[Maximal Inequality]
\label{maximal}
Let $ \cX_1,\dots,\cX_n $ be a sequence of independent random variables. For any finite subset $ \Phi $ of real functions, assume there exist some constants $ \sigma,b>0 $ such that for any $ \phi\in\Phi $
$$
\sino \bE\phi^2(\cX_i)\leq \sigma^2\text{~and~} \|\phi\|_\infty\leq b.
$$
Then: 
\begin{equation}\label{eq maximal inequality}
\bE\sup_{\phi\in\Phi}\left|\sino\phi(\cX_i)-\bE\phi(\cX_i)\right|\leq \frac{2\sigma}{\sqrt n}\sqrt{2\log(|\Phi|)}+\frac{2b}{3n}\log(|\Phi|),
\end{equation}
where $ |\Phi| $ denotes the cardinal of the set $ \Phi $.
\end{lemma}
 
Now, we start with the main part of the proof of the lemma. Consider $ L:\bc\in\cC\mapsto L(\bc)= \gamma_\lambda(\bc,Z)-\gamma_\lambda(\mathbf{c}^*,Z)$. We will use a chaining argument on the function $L$ as follows. For some $\delta>0$, let $ \bc^0\in\cC $ be fixed such that $ \nno\mathbf{c}^0-\mathbf{c}^*\nno^2 \leq \delta $. For some $0<a<1$, for any $ v\in\bN^* $, denote $ \Gamma_v $ a $ \delta a^{v} $-net of $ \cC $. For $ \bc\in\cC $, introduce the following notations:
$$
u_0(\bc):=\bc^0,~~ u_v(\bc):=\arg\inf_{u\in\Gamma_v}\nno u-\bc\nno,~~v\in\bN^*.
$$
We then deduce that $ u_v(\bc)\to\bc $ for $ v\to\infty $. The main ingredient of the chaining argument is the following decomposition (by continuity of $L$ using dominated convergence theorem):
$$
L(\bc)=L(\bc^0)+\sum_{v\in\bN^*}L(u_v(\bc))-L(u_{v-1}(\bc)).
$$
Note that for simplicity, in the sequel, we write $\bc^0$ instead of $\bc^0(\bc^*)$ since this vector depends on the minimum $\bc^*\in\mathcal{M}$ where the cardinality of $\mathcal{M}$ could satisfy $|\mathcal{M}|\geq 2$.
For easy of exposition, denote $ P_n $ the empirical measure and $ P $ the expectation w.r.t. the distribution $ \probaZ $, we then obtain:
\begin{align}\label{eq chaining}
\mathbb{E}& \sup_{(\mathbf{c},\mathbf{c}^*)\in\mathcal{C}\times\mathcal{M}, \|\mathbf{c}-\mathbf{c}^*\|^2\leq \delta} {\big|R_n^{\lambda}-R^{\lambda}\big|(\bc,\bc^*)}\nonumber\\
&=\mathbb{E}  \sup_{\mathbf{c}^* \in \mathcal{M}}\sup_{\bc\in\cC\::\:\|\mathbf{c}-\mathbf{c}^*\|^2 \leq \delta} {\left|(P_n-P)(\gamma_\lambda(\mathbf{c}^*,Z) - \gamma_\lambda(\mathbf{c},Z))\right|}\nonumber\\
&\leq\mathbb{E}\sup_{\mathbf{c}^* \in \mathcal{M}} \left|(P_n-P)\big(\gamma_\lambda(\bc^0,Z)-\gamma_\lambda(\mathbf{c}^*,Z)\big)\right|\nonumber\\
&~~+\mathbb{E}\sup_{\mathbf{c}^* \in \mathcal{M}} \sup_{\bc\in\cC\::\:\|\mathbf{c}-\mathbf{c}^*\|^2 \leq \delta}\sum_{v\in\bN^*}\left|(P_n-P)\big(\gamma_\lambda(u_v(\bc),Z)-\gamma_\lambda(u_{v-1}(\bc),Z)\big)\right|\nonumber\\
&\leq\mathbb{E}\sup_{\mathbf{c}^*\in\mathcal{M}} \left|(P_n-P)\big(\gamma_\lambda(\bc^0,Z)-\gamma_\lambda(\mathbf{c}^*,Z)\big)\right|\nonumber\\
&~~+\sum_{v\in\bN^*}\mathbb{E}\sup_{(u,u')\in\Gamma_v\times\Gamma_{v\text-1}\::\:\|u-u'\|^2 \leq \delta a^v}\left|(\empiricalproba-P)\big(\gamma_\lambda(u,Z)-\gamma_\lambda(u',Z)\big)\right|\nonumber\\
&=:A_1+A_2
\end{align}

We will now find bounds of $ A_1 $ and $ A_2 $ thanks to Lemma \ref{maximal}.

\paragraph{Bound of $A_1$:} 
We first remind that $|\mathcal{M}|$ is finite. We can then apply Lemma \ref{maximal}, with $ \phi(\cX_i)=\gamma_\lambda(\bc^0,Z_i)-\gamma_\lambda(\mathbf{c}^*,Z_i) $. Indeed, using the definition of $ \bc^0 $ and Lemmas \ref{lipappli} and \ref{lem sup bound process}, we have $ \sigma=\constsupboundprocess\sqrt{\constantsupconvolution}\lambda^{-\bar\beta}\sqrt\delta $ and $ b = \constsupboundprocess\lambda^{-\bar\beta-d/2}\sqrt\delta $ in Lemma \ref{maximal}, it yields:
$$
A_1=\mathbb{E}\sup_{\mathbf{c}^*\in\mathcal{M}} \left|(P_n-P)\big(\gamma_\lambda(\bc^0,Z)-\gamma_\lambda(\mathbf{c}^*,Z)\big)\right|\leq 2\constsupboundprocess\sqrt{\constantsupconvolution}\sqrt{2\log(|\cM|)}\frac{\lambda^{-\bar\beta}\sqrt\delta}{\sqrt n}
+\frac{2\constsupboundprocess\log(|\cM|)}{3}\frac{\lambda^{-\bar\beta-d/2}\sqrt\delta}{n}.
$$

\paragraph{Bound of $A_2$:} 
As previously, we use the maximal inequality with $ \phi(\cX_i)=\gamma_\lambda(u,Z_i)-\gamma_\lambda(u',Z_i) $ to the finite set $ \Phi=\Gamma_v\times\Gamma_{v\text-1} $. According to Lemmas \ref{lipappli} and \ref{lem sup bound process}, we have $ \sigma=\constsupboundprocess\sqrt{\constantsupconvolution}\lambda^{-\bar\beta}\sqrt{a^{v}\delta}  $ and $ b = \constsupboundprocess\lambda^{-\bar\beta-d/2}\sqrt{a^{v}\delta} $. We then have for any $ v\in\bN^* $
\begin{align*}
\mathbb{E}&\sup_{(u,u')\in\Gamma_v\times\Gamma_{v\text-1}\::\:\|u-u'\|^2 \leq \delta a^v}\left|(P_n-P)\big(\gamma_\lambda(u,Z)-\gamma_\lambda(u',Z)\big)\right|\\
&\leq 2\constsupboundprocess\sqrt{\constantsupconvolution} a^{v/2}\sqrt{2\log(|\Gamma_v||\Gamma_{v\text-1}|)}\frac{\lambda^{-\bar\beta}\sqrt\delta}{\sqrt n}
+\frac{2\constsupboundprocess a^{v/2}\log(|\Gamma_v||\Gamma_{v\text-1}|)}{3}\frac{\lambda^{-\bar\beta-d/2}\sqrt\delta}{n}.
\end{align*}
We note that $ |\Gamma_v|=\left(\sqrt{kd}/a^v\right)^{-kd} $ and taking $a=1/4$, we obtain $ \sum_{v\in\bN^*}a^{v/2}\log(|\Gamma_v||\Gamma_{v\text-1}|)\leq kd[\log (kd)+6\log 2]=:\zeta_4 $. We then have from the definition of $ A_2 $ and the last inequality:
$$
A_2\leq 2\sqrt{2}\constsupboundprocess\sqrt{\constantsupconvolution}\zeta_4\frac{\lambda^{-\bar\beta}\sqrt\delta}{\sqrt n}
+2\constsupboundprocess\zeta_4\frac{\lambda^{-\bar\beta-d/2}\sqrt\delta}{3n}.
$$
From last bounds of $ A_1 $ and $ A_2 $, and the chaining decomposition \eqref{eq chaining}, it yields
\begin{align*}
\bE &\sup_{\mathbf{c}^* \in \mathcal{M}}\sup_{\bc\in\cC\::\:\|\mathbf{c}-\mathbf{c}^*\|^2 \leq \delta} {|\empiricalproba-P|(\gamma_\lambda(\mathbf{c}^*,Z) - \gamma_\lambda(\mathbf{c},Z))}\\
&\leq2\sqrt{2}\constsupboundprocess\sqrt{\constantsupconvolution}\big(\sqrt{\log (|\mathcal{M}|)}+\zeta_4\big)\frac{\sqrt\delta}{\sqrt n \lambda^{\bar\beta}}
+\frac{2\constsupboundprocess }{3}\big(\log (|\mathcal{M}|)+\zeta_4\big)\frac{\sqrt\delta}{n\lambda^{\bar\beta+d/2}}
\end{align*}
The proof is complete by definition of $ \constloustau $.
\epr


\end{proof}

{
\bibliographystyle{plainnat}
\bibliography{reference2}
}
\end{document}